\newcolumntype{L}{>{$}l<{$}}
\theoremstyle{plain}
\newtheorem{thm}{Theorem}[section]
\newtheorem*{thm*}{Theorem}
\newtheorem{lem}[thm]{Lemma}
\newtheorem{prop}[thm]{Proposition}
\newtheorem{cor}[thm]{Corollary}
\newtheorem{ques}{Question}
\newtheorem*{claim}{Claim}
\newtheorem{mythm}{Theorem}
\theoremstyle{definition}
\newtheorem{defn}[thm]{Definition}
\theoremstyle{remark}
\newtheorem*{rem}{Remark}
\newcommand{\U}{\mathcal U}
\newcommand{\Ga}{\Gamma}
\newcommand{\Hi}{\mathcal H}
\newcommand{\act}{\curvearrowright}
\newcommand{\Op}{\mathcal O}
\newcommand{\eps}{\epsilon}
\newcommand{\re}{\mathbb{R}}
\newcommand{\na}{\mathbb{N}}
\newcommand{\Isom}{\mathrm{Isom}}
\newcommand{\Prob}{\mathrm{Prob}}
\renewcommand{\O}{\mathcal{O}}
\renewcommand{\a}{\alpha}
\newcommand{\G}{\Gamma}
\newcommand{\D}{\Delta}
\newcommand{\e}{\epsilon}
\renewcommand{\l}{\lambda}
\newcommand{\s}{\sigma}
\title{On growth of cocycles of isometric representations on $L^p$-spaces}
\author{Antonio López Neumann and Juan Paucar}
\date{\today}
\begin{document}

\maketitle

\begin{abstract}

    We study different notions of asymptotic growth for 1-cocycles of isometric representations on Banach spaces. One can see this as a way of quantifying the absence of fixed point properties on such spaces. Inspired by the work of Lafforgue, we show the following dichotomy: for a compactly generated group $G$, either all 1-cocycles of $G$ taking values in $L^p$-spaces are bounded (this is Property $FL^p$) or there exists a 1-cocycle of $G$ taking values in an $L^p$-space with relatively fast growth. We also obtain upper and lower bounds on the average growth of harmonic 1-cocycles with values in Banach spaces with convexity properties. As a consequence, we obtain bounds on the average growth of all 1-cocycles with values in $L^p$-spaces for groups with property $(T)$. Lastly, we show that for a compactly generated group $G$, the existence of a 1-cocycle with compression larger than $\sqrt{n}$ implies the Liouville property for a large family of probability measures on $G$.

    \vspace*{2mm} \noindent{2020 Mathematics Subject Classification:} 20J06,  22D05, 22D55, 46B08.

    %20J06 Cohomology of groups
    %22D05   General properties and structure of locally compact groups
    %22D55   Kazhdan's property (T), the Haagerup property, and generalizations
    %46B08   Ultraproduct techniques in Banach space theory

    \vspace*{2mm} \noindent{Keywords and Phrases:} locally compact groups, cocycles, uniformly convex spaces, property $FL^p$, ultraproducts, random walks.
\end{abstract}

\tableofcontents

\section{Introduction}

Let $G$ be a compactly generated locally compact group with compact generating set $S$ and associated word length $|\cdot |_S$. Let $\pi: G \to \mathcal{O}(E)$ be a continuous isometric representation of $G$ on some Banach space $E$. A \textit{$1$-cocycle} for the representation $\pi$ is a continuous map $b : G \to E$ that satisfies the so-called \textit{cocycle relation}:
\begin{equation*}
    b(gh) = b(g) + \pi(g) b(h),
\end{equation*}
for every $g, h \in G$. The space of 1-cocycles for $\pi$ is denoted by $Z^1(G, \pi)$.

Given a 1-cocycle $b$, we may look at how it "grows at infinity". This sentence can have different meanings. Here we investigate three different notions of growth for 1-cocycles. The first one is the \textit{diameter growth} or \textit{$L^\infty$-growth} of $b$, that is, the function given by:
\begin{equation*}
    n \mapsto \sup_{|g|_S \leq n} ||b(g)||_E.
\end{equation*}

We may also look at the \textit{equivariant compression of $b$}, this is the function defined by:
\begin{equation*}
    n \mapsto \rho_b(n) : = \inf_{|g|_S = n} ||b(g)||_E.
\end{equation*}

We also introduce a third notion of growth that will often interpolate between these two previous notions. Given a cohomologically adapted  probability measure $\mu$ on $G$ (see Definition \ref{Defn Cohomologically Adapted}) and $0<p<\infty$, we may look at the \textit{average growth of $b$ in $L^p(\mu)$-norm}, which we define by:
\begin{equation*}
     n \mapsto ||b||_{L^p(\mu^{*n})} := \Big( \int_G ||b(g)||_E^p \, \mathrm{d} \mu^{*n} (g) \Big)^{1/p}.
\end{equation*}

Most of the results in these notes are extensions of previously known results on the asymptotic behaviour of the growth of 1-cocycles of unitary representations from \cite{lafforgue-typeneg, cornulier-tessera-valette}, to isometric representations on (separable) $L^p$-spaces.

\paragraph{An $L^p$-analogue of a theorem by Lafforgue}

We are interested in (the lack of) fixed point properties on different types of Banach spaces. We start this discussion with the case of Hilbert spaces. We say that a locally compact group $G$ has \textit{Property $(FH)$} if every continuous isometric affine action of $G$ on a Hilbert space has a fixed point. Property $(FH)$ can be reformulated in terms of cocycles: a compactly generated locally compact group $G$ has Property $(FH)$ if and only if all 1-cocycles of $G$ on unitary representations are bounded. Hence looking at growth of 1-cocycles on unitary representations is only interesting for groups without property $(FH)$. It is natural to ask if there exist groups that are close to having property $(FH)$ in the sense that all 1-cocycles of unitary representations grow slowly. In \cite{lafforgue-typeneg}, Lafforgue showed that from the very moment a group does not have property $(T)$, it admits a 1-cocycle for some unitary representation whose diameter grows relatively fast (we recall that the Delorme-Guichardet theorem says that for $\s$-compact groups Property $(FH)$ and Property $(T)$ are equivalent \cite[Théorème V.1]{delorme-1-cohomologie-reps-unitaires} \cite[2.12.4]{bekka-delaHarpe-valette}).

\begin{thm}\cite[Théorème 3]{lafforgue-typeneg}\label{Intro: Lafforgue's theorem}
Let $G$ be a locally compact, compactly generated group with compact generating set $S$ and without property $(T)$. Then there exists a Hilbert space $\Hi$, a unitary representation $\pi : G \to \mathcal{U} (\Hi)$ and a nonzero 1-cocycle $b \in Z^1(G, \pi)$ such that for all $n\geq 1$:
\begin{equation*}
    \sup_{|g|_S \leq n} || b(g) ||_\Hi \geq \left(\frac{\sqrt{n}}{2} - 2 \right)  \sup_{|g|_S \leq 1} || b(g) ||_\Hi.
\end{equation*}
\end{thm}

Lafforgue's proof relies on a convergence argument for functions conditionally of negative type and a circumcenter argument.

Actions of groups on $L^p$-spaces have been studied since \cite{pansu89} and gained interest later with the works of \cite{fisher-margulis-inventiones}, \cite{BFGM} and \cite{yu-lp-spaces} (see also the survey in \cite[Chapter 6]{nowak-yu}).
The direct extension of property $(T)$ to an $L^p$-setting are the fixed point properties $FL^p$ for $0<p<\infty$, first introduced in \cite{BFGM}.

Throughout this article, \textit{$L^p$-space} means $L^p(\Omega, m)$ where $(\Omega, m)$ is some measured space and $0<p \leq \infty$. We will mostly focus on \textit{separable $L^p$-spaces}, which are known to coincide with \textit{standard $L^p$-spaces} (these are exactly $L^p([0, 1])$ with Lebesgue measure, $\ell^p (\na)$ with the counting measure, finite dimensional $\ell^p$ spaces and finite direct sums of these spaces). 

\begin{defn}
   A locally compact group $G$ is said to have \textit{property $FL^p$} if every continuous affine isometric action on any separable $L^p$-space has a fixed point. Equivalently, $G$ has $FL^p$ when all $1$-cocycles associated to isometric representations on separable $L^p$-spaces are bounded.
\end{defn}

%The Delorme-Guichardet theorem says that property $(T)$ is equivalent to $FL^2$. 
Property $FL^p$ is equivalent to property $(T)$ for $p \in (0, 2]$ \cite{BFGM}. On the other hand, it is a stronger property for $p>2$, as any discrete hyperbolic group $\G$ admits affine actions without fixed points for $p> \mathrm{Confdim} (\partial \G)$ \cite{bourdon-pajot} and there exist hyperbolic groups with property $(T)$. Hence we can ask whether an analogue of Lafforgue's result holds for groups without property $FL^p$ for $p>2$. This is what we prove.

\begin{mythm}[Theorem \ref{LCLpproof1}]  \label{LCFLp1}
    Let $G$ be a locally compact, compactly generated group with compact generating set $S$. Suppose that $G$ does not have property $FL^p$ for some $2 \leq p < \infty$.
Then there exist a separable $L^p$-space $E$, a strongly continuous isometric representation $\pi\colon G\to \Op(E)$ and a nonzero cocycle $b\in Z^1(G,\pi)$ satisfying for every $n\geq 1$:
       \begin{equation*}
     \sup_{|g|_S\leq n} ||b(g)|| \geq \left(\frac 12 \left( \frac{1}{2^{p-1}}(n-1)+1 \right)^{\frac 1p} - 2\right) \sup_{|g|_S\leq 1} ||b(g)||.
 \end{equation*}
 
\end{mythm}

\begin{rem}
    Theorem \ref{LCFLp1} is mostly relevant when the group $G$ has property $(T)$. Indeed, \cite[3.1]{marrakchi-dlSalle} says that for every 1-cocycle $b$ of any unitary representation and for any $1<p< \infty$, there exists some separable $L^p$-space $E$, a continuous isometric action $\pi_p \colon G\to \mathcal \mathcal{O}(E)$ and a cocycle $b_p \in Z^1(G, \pi_p)$ such that for all $g \in G$ we have:
   \begin{equation*}
       || b_p(g)||_E = ||b(g)||_\Hi.
   \end{equation*}
    If $G$ does not have property $(T)$, plugging the cocycle given by Lafforgue's Theorem into this result yields for any $1<p< \infty$ a 1-cocycle $b_p$ on some separable $L^p$-space such that for every $n \geq 1$ we have:
   \begin{equation*}
     \sup_{|g|_S\leq n} || b_p(g)|| \geq \left(\frac{\sqrt{n}}{2} - 2 \right)  \sup_{|g|_S \leq 1} || b_p(g) ||.
   \end{equation*}
\end{rem}

In fact, our approach for Theorem \ref{LCFLp1} works in the more general setting of $p$-uniformly convex metric spaces (\cite[Definition 3.2]{naor-silberman}, see  Definition \ref{Def: p unif convex metric space} here). In this more general metric context, the corresponding fixed point properties are the following.
\begin{defn} Let $\mathcal{X}$ be a class of metric spaces.
 A locally compact group $G$ is said to have \textit{property $F \mathcal{X}$} if every continuous isometric action of $G$ on any metric space $X \in \mathcal{X}$ has a fixed point.
\end{defn}

Our extension of Theorem \ref{LCFLp1} to the metric setting will hold for classes $\mathcal{X}$ of metric spaces with the following properties.
We will say that $\mathcal{X}$ is \emph{stable under scaling} if for every $(E, d) \in \mathcal{X}$ and $t > 0$, we have $(E, t d) \in \mathcal{X}$. Fix a non-principal ultrafilter $\mathcal{U}$ of $\na$. We will say that $\mathcal{X}$ is \emph{stable under ultraproducts} if for every sequence $(E_i, d_i) \in \mathcal{X},  i \in \na$, we have that the ultraproduct metric space $(\prod_{\mathcal{U}} E_i, d_\mathcal{U}) \in \mathcal{X}$. These conditions will be enough to deal with actions of discrete groups. Examples of classes of $(p,c)$-uniformly convex metric spaces that are stable under scaling and under ultraproducts are the following: 
\begin{itemize}
    \item the class of Hilbert spaces ($p = 2$ and $c = 1$);
    \item for a fixed $1 < r < \infty$, the class of all (separable and non-separable) $L^r$-spaces   ($p = \max \{2, r \}$; and when $2 \leq r < \infty$ we have $c = \frac{1}{2^{r-1}}$);
    \item any class of $(p, c)$-uniformly convex Banach spaces which is stable under scaling and under ultraproducts;
    \item the class of CAT$(0)$ spaces ($p = 2$ and $c = 1$);
    \item the class of $\mathbb R$-trees ($p = 2$ and $c = 1$).
\end{itemize}

When considering strongly continuous representations of locally compact groups on Banach spaces, we will also need our classes of Banach spaces to be \textit{stable under smoothing} (see Definition \ref{defnUltraproductrep}).

The result we show is the following.

\begin{mythm}[Theorems \ref{discreteGeneralVersionproof} and \ref{LCLpproof1}]  \label{Intro: Fast cocycle growth for actions on metric spaces}
    Let $G$ be a locally compact, compactly generated group with compact generating set $S$, let $\mathcal{X}$ be a class of metric spaces for which $G$ does not have property $F \mathcal{X}$. Let $2 \leq p < \infty$ and $c>0$. Suppose one of the following conditions hold.
    \begin{enumerate}
        \item[(1)] The group $G$ is discrete and $\mathcal{X}$ is any subclass of $(p, c)$-uniformly convex metric spaces that is stable under scaling and ultraproducts. 
        \item[(2)]  $\mathcal{X}$ is any subclass of $(p, c)$-uniformly convex Banach spaces that is stable under scaling, ultraproducts and smoothing.
    \end{enumerate}
 Then there exists a space $(F,d) \in \mathcal{X}$, a continuous isometric action  $\s \colon G\to \Isom(F)$ and a basepoint $v \in F$ such that:
       \begin{equation*}
     \sup_{|g|_S\leq n} d(\s(g) v , v )  \geq \left(\frac 12(c(n-1)+1)^{\frac 1p} - 2\right) \sup_{|g|_S\leq 1} d(\s(g) v , v )
 \end{equation*}
 for all $n\geq 1$ and $\sup_{|g|_S\leq 1} d(\s(g) v , v )>0$.

\end{mythm}

The main difference between our techniques and those of \cite{lafforgue-typeneg}, is that instead of using properties of functions of conditionally negative type, we use an ultraproduct construction. This is the main idea to prove Theorem \ref{Intro: Fast cocycle growth for actions on metric spaces} $(1)$. To prove Theorem \ref{Intro: Fast cocycle growth for actions on metric spaces} $(2)$ we need to deal with the fact that the natural action on this ultraproduct may not be continuous: this is why we require the class $\mathcal{X}$ to be stable under smoothing.
Theorem \ref{LCFLp1} requires some additional care as an ultraproduct of separable $L^p$-spaces is a non-separable $L^p$-space: Proposition \ref{separable Lp space inside E_U}, proven in Appendix \ref{Section: Appendix}, takes care of this issue.

\paragraph{Harmonic cocycles and average growth}

We can recover Lafforgue's result in a more direct way using a theorem often attributed to Shalom: Property $(T)$ is equivalent to vanishing of the first reduced cohomology group for every unitary representation (cf. \cite[6.1]{shalom-rigidity}, see also \cite[3.8.B]{gromov2003random} and \cite[4.1.4]{korevaar-schoen}). For unitary representations, the first reduced cohomology group can be identified with the space of $\mu$-harmonic 1-cocycles, for any cohomologically adapted probability measure $\mu$ on $G$ (see Definition \ref{Defn Cohomologically Adapted}). This means that the absence of Property $(T)$ is equivalent to the existence of a nonzero $\mu$-harmonic cocycle of a unitary representation. Since a $\mu$-harmonic cocycle $b$ of a unitary representation satisfies:
\begin{equation*}
      ||b||_{L^2(\mu^{*n})} = \sqrt{n} \, ||b||_{L^2(\mu)}
\end{equation*}
(see e.g. \cite{erschler-ozawa}). Moreover, we can choose some cohomologically adapted measures such that $\mathrm{supp} (\mu) \subseteq S$, so that for all $n \geq 1$ we have:
\begin{equation*}
    \sup_{|g|_S \leq n} || b(g) ||_\Hi \geq ||b||_{L^2(\mu^{*n})},
\end{equation*} 
which implies Lafforgue's result with a (potentially) worse multiplicative constant.

We show that $\mu$-harmonic cocycles on $p$-uniformly convex spaces satisfy an estimate similar to the one obtained in Theorem \ref{LCFLp1}, but for averages in $L^p$-norm. We also obtain a complementary estimate for $q$-uniformly smooth spaces, though this inequality was already obtained in \cite{naor-peres-schramm-sheffield} and \cite{naor-peres} for discrete groups.

\begin{mythm} \label{Intro: Growth of harmonic cocycles}
Let $G$ be a locally compact, compactly generated group and let $\mu \in \Prob(G)$ be a cohomologically adapted probability measure on $G$. Let $\pi: G \to \mathcal{O}(E)$ be a continuous isometric representation of $G$ on a Banach space $E$ and let $b \in Z^1(G, \pi)$ be a $\mu$-harmonic 1-cocycle. \\
$\bullet$ If $E$ is a smooth $(p, c_E)$-uniformly convex Banach space, then for every $n \geq 1$ we have:
    \begin{equation*}
        ||b||_{L^p(\mu^{*n})} \geq  (c_E(n-1) + 1)^{\frac{1}{p}} ||b||_{L^p(\mu)}.
    \end{equation*}
$\bullet$ If $E$ is a $(q, d_E)$-uniformly smooth Banach space, then for every $n \geq 1$ we have:
    \begin{equation*}
        ||b||_{L^q(\mu^{*n})} \leq  (d_E(n-1) + 1)^{\frac{1}{q}} ||b||_{L^q(\mu)}.
    \end{equation*}
\end{mythm}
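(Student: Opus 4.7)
The plan is to prove the one-step estimate $||b||_{L^p(\mu^{*(n+1)})}^p \geq ||b||_{L^p(\mu^{*n})}^p + c_E ||b||_{L^p(\mu)}^p$, from which the first bullet follows by a straightforward induction on $n$ with the tautological base case $n = 1$. Writing $\mu^{*(n+1)} = \mu^{*n} * \mu$ and using the cocycle identity $b(gh) = b(g) + \pi(g) b(h)$ together with the fact that $\pi$ is isometric gives
$$||b||_{L^p(\mu^{*(n+1)})}^p = \int_G \int_G ||b(g) + \pi(g) b(h)||^p \, \mathrm{d}\mu(h) \, \mathrm{d}\mu^{*n}(g),$$
so it is enough to bound the inner integral pointwise in $g$ and then integrate.

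The key tool I would invoke is the \emph{functional} form of $(p, c_E)$-uniform convexity valid in smooth spaces: since $E$ is smooth, the map $x \mapsto \tfrac{1}{p}||x||^p$ admits a G\^ateaux derivative $J_p(x) \in E^*$, and smooth $(p, c_E)$-uniform convexity is equivalent to the estimate
$$||x + y||^p \geq ||x||^p + p \, \langle J_p(x), y \rangle + c_E ||y||^p \qquad \text{for all } x, y \in E.$$
Applying this with $x = b(g)$ and $y = \pi(g) b(h)$ and integrating in $h$ against $\mu$, the crucial point is that the linear cross term
$$p \int_G \langle J_p(b(g)), \pi(g) b(h) \rangle \, \mathrm{d}\mu(h) = p \, \Bigl\langle J_p(b(g)), \, \pi(g) \int_G b(h)\, \mathrm{d}\mu(h) \Bigr\rangle$$
vanishes because $b$ is $\mu$-harmonic (using that both $J_p(b(g)) \in E^*$ and $\pi(g)$ are bounded linear, so they commute with the Bochner integral). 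Combined with $||\pi(g) b(h)|| = ||b(h)||$, this yields $\int_G ||b(gh)||^p \, \mathrm{d}\mu(h) \geq ||b(g)||^p + c_E ||b||_{L^p(\mu)}^p$, and integrating in $g$ against $\mu^{*n}$ produces the one-step estimate above.

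For the second bullet I would run the identical scheme using the $(q, d_E)$-uniform smoothness inequality
$$||x + y||^q \leq ||x||^q + q \, \langle J_q(x), y \rangle + d_E ||y||^q,$$
which is a one-sided upper bound requiring no additional smoothness hypothesis; harmonicity again kills the linear term, yielding $||b||_{L^q(\mu^{*(n+1)})}^q \leq ||b||_{L^q(\mu^{*n})}^q + d_E ||b||_{L^q(\mu)}^q$ and hence the conclusion by induction. I expect the main technical obstacle to be extracting the correct functional formulation of $(p, c_E)$-uniform convexity rather than its symmetric Clarkson-type two-point version $\tfrac{1}{2}(||x+y||^p + ||x-y||^p) \geq ||x||^p + c_E ||y||^p$, since only the functional form pairs naturally with $\int_G b(h) \, \mathrm{d}\mu(h) = 0$ to annihilate the cross term; this is also precisely why smoothness of $E$ must be assumed in the first bullet but is absent from the second.
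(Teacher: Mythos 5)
Your argument is correct and is essentially the paper's own proof: both rest on the functional inequality $\|x+y\|^p \geq \|x\|^p + p\langle y, x^{*_p}\rangle + c_E\|y\|^p$ for smooth $(p,c_E)$-uniformly convex spaces (your $J_p$ is the paper's $x^{*_p}$), kill the cross term by $\mu$-harmonicity, and iterate the resulting one-step estimate. The only cosmetic difference is that the paper first rewrites $\|b(gh)\| = \|b(h)-b(g)\|$ using the isometry and the symmetry of $\mu$ before applying the inequality, whereas you apply it directly to $b(g) + \pi(g)b(h)$ and let $\pi(g)\int_G b(h)\,\mathrm{d}\mu(h) = 0$ do the work, which avoids needing the symmetry of $\mu$ at that step.
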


The inequalities obtained in Theorem \ref{Intro: Growth of harmonic cocycles} extend to all non-zero classes of 1-cocycles when the representation in question has spectral gap (up to a multiplicative constant depending on the cocycle). In particular, for groups with property $(T)$ (but without property $FL^p$ for some $2<p<\infty$), this applies to all isometric representations on separable $L^p$-spaces. %and non-commutative $L^p$-spaces.

\begin{mythm} \label{Intro: Not FLp and T implies fast growth on all cocycles}
    Let $G$ be a locally compact, compactly generated group with property $(T)$ but without property $FL^p$ for some $2 < p < \infty$ and $\mu \in \Prob(G)$ be a cohomologically adapted probability measure on $G$. Then for every continuous isometric representation $\pi: G \to \mathcal{O}(E)$ of $G$ on any separable $L^p$-space %or non-commutative $L^p$-space 
    $E$ such that $H_\mathrm{ct}^1(G, \pi) \neq 0$ and for every 1-cocycle $b \in Z^1(G, \pi) \setminus  B^1(G, \pi)$, there exist $C = C(b, p)$ such that for all $n \geq 1$ we have:
    \begin{equation*}
         ||b||_{L^p(\mu^{*n})} \geq C^{-1}  n^{1/p} \quad \text{ and }  \quad  ||b||_{L^2(\mu^{*n})} \leq C  n^{1/2}.
    \end{equation*}
        
\end{mythm}

A consequence of this fact is the absence in separable $L^p$-spaces %and in non-commutative $L^p$ spaces 
($2 <p < \infty$) of a classical property of cocycles of unitary representations. Indeed, given a locally compact a-$T$-menable group $G$, there exist "arbitrarily slow proper cocycles", that is, for any proper function $f: G \to [1, + \infty )$ there exists a proper 1-cocycle $b$ on some unitary representation of $G$ such that $||b(g)|| \leq f(g)$ for $g \in G$ \cite[3.10]{cornulier-tessera-valette}. Theorem \ref{Intro: Not FLp and T implies fast growth on all cocycles} implies that given any locally compact group with property $(T)$, for any $2<p<\infty$, if $b$ is a 1-cocycle of some isometric representation on a separable $L^p$-space for which there is some $\a < 1/p$ such that $||b(g)|| \leq |g|^\a$  for all $g \in G$, then $b$ is bounded.

\paragraph{Drift and equivariant compression}

The existence of cocycles with fast compression growth imposes conditions on random walks.
The following result is originally due to Guentner and Kaminker \cite[Theorem 5.3]{guentner-kaminker} for discrete groups, and to Cornulier, Tessera and Valette for locally compact groups.

\begin{thm}\label{Intro: CTV thm}
    \cite[Theorem 1.2]{cornulier-tessera-valette} Let $G$ be a compactly generated locally compact group. Suppose that there exist a continuous unitary representation $\pi : G \to \mathcal{U}(\Hi)$ and a 1-cocycle $b \in Z^1(G, \pi)$ such that:
    \begin{equation*}
        \rho_b (n) = \inf_{|g|_S = n} ||b(g)||_{\Hi} \succ \sqrt{n},
    \end{equation*}
    that is, $\sqrt{n} / \rho_b(n)  \xrightarrow{n \to \infty}0$. Then $G$ is amenable.
\end{thm}

Naor and Peres showed a strengthening of this result in the discrete case (which was already hinted at in \cite{austin-naor-peres}).

\begin{thm}\label{Intro: Naor Paeres thm}
    \cite[Theorem 1.1]{naor-peres} Let $\G$ be a finitely generated group. Suppose that there exist some isometric representation $\pi : \G \to \mathcal{O}(E)$ on a $q$-uniformly smooth Banach space $E$, with $1 < q \leq 2$, and a 1-cocycle $b \in Z^1(\G, \pi)$ such that:
    \begin{equation*}
        \rho_b (n) = \inf_{|g|_S = n} ||b(g)||_{E} \succ n^{1/q}.
    \end{equation*}
    Then for every finitely supported probability measure $\mu \in \mathrm{Prob} (\G)$ with support generating $\G$, the drift $l(\mu)$ of the random walk driven by $\mu$ is $0$. This means that $(\G, \mu)$ has the Liouville property, that is, every bounded $\mu$-harmonic function on $\G$ is constant.
\end{thm}

The conclusion of Theorem \ref{Intro: Naor Paeres thm} is stronger than that of Theorem \ref{Intro: CTV thm}, as a discrete group $\G$ is amenable if and only if there exists $\mu \in \mathrm{Prob} (\G)$ such that $(\G, \mu)$ has the Liouville property \cite{kaimanovich-vershik}. Also, the hypothesis only requires a fast cocycle for some isometric representation on some $q$-uniformly smooth Banach space.

Our last theorem is an extension of Theorem \ref{Intro: Naor Paeres thm} to the non-discrete case. In the case of unitary representations, it can also be seen as a strengthening of the conclusion of Theorem \ref{Intro: CTV thm}.

\begin{mythm} \label{Intro: Compression implies drift=0}
    Let $G$ be a compactly generated locally compact group. Suppose that there exist a continuous isometric representation $\pi : G \to \mathcal{O}(E)$ on a $q$-uniformly smooth Banach space $E$ and a 1-cocycle $b \in Z^1(G, \pi)$ such that:
    \begin{equation*}
        \rho_b (n) = \inf_{|g|_S = n} ||b(g)||_{E} \succ n^{1/q}.
    \end{equation*}
    Then for every cohomologically adapted probability measure $\mu \in \mathrm{Prob} (G)$ with compact support $S$, the drift $l(\mu)$ of the random walk driven by $\mu$ is $0$. This means that $(G, \mu)$ has the Liouville property, that is, every continuous bounded $\mu$-harmonic function on $G$ is constant. %This means that the Poisson boundary $(G, \mu)$ is trivial.
\end{mythm}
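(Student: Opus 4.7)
The plan is to sandwich $\|b\|_{L^2(\mu^{*n})}^2$ between a linear upper bound that holds automatically for any $1$-cocycle of a unitary representation once $\mu$ is symmetric, and a lower bound supplied by the compression hypothesis, then to feed the resulting moment estimate into the Kaimanovich--Vershik entropy/drift machinery to obtain the Liouville property.

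For the upper bound, I would set $E_n := \|b\|_{L^2(\mu^{*n})}^2$, $B_n := \int_G b(g)\, d\mu^{*n}(g)$ and $T_\mu := \int_G \pi(g)\, d\mu(g)$. Expanding $\|b(gh)\|^2 = \|b(g)\|^2 + 2\,\mathrm{Re}\langle b(g), \pi(g) b(h)\rangle + \|b(h)\|^2$, integrating against $\mu \otimes \mu^{*n}$, and using that $\mu$ is symmetric so that $\int_G \pi(g)^* b(g)\, d\mu(g) = -B_1$, one derives the telescoping identity
\begin{equation*}
    E_n \,=\, n\,\|b\|_{L^2(\mu)}^2 \,-\, 2 \sum_{k=0}^{n-1} \langle B_1, B_k\rangle.
\end{equation*}
Since $B_k = (I + T_\mu + \cdots + T_\mu^{k-1})\, B_1$ and $T_\mu$ is a self-adjoint contraction on $\Hi$, the spectral theorem applied to the scalar spectral measure $\sigma$ of $B_1$ gives
\begin{equation*}
    \langle B_1, B_k\rangle \,=\, \int_{-1}^{1} \frac{1 - \lambda^k}{1 - \lambda}\, d\sigma(\lambda) \,\geq\, 0,
\end{equation*}
as the integrand is non-negative on $[-1,1]$ (with value $k$ at $\lambda = 1$). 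Therefore each summand is $\geq 0$ and we obtain the clean bound $\|b\|_{L^2(\mu^{*n})}^2 \leq n\,\|b\|_{L^2(\mu)}^2$.

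The compression hypothesis $\rho_b(n) \succ \sqrt{n}$ says that for every $\eta > 0$ there exists $N = N(\eta)$ such that $m \leq \eta\, \rho_b(m)^2$ for all $m \geq N$. Combining this with $\|b(g)\|^2 \geq \rho_b(|g|_S)^2$ and the upper bound from the previous step,
\begin{align*}
    \int_G |g|_S\, d\mu^{*n}(g)
    &\leq\, N + \int_{\{|g|_S \geq N\}} |g|_S\, d\mu^{*n}(g) \\
    &\leq\, N + \eta \int_G \rho_b(|g|_S)^2\, d\mu^{*n}(g) \,\leq\, N + \eta\, n\, \|b\|_{L^2(\mu)}^2.
\end{align*}
Dividing by $n$, letting first $n \to \infty$ and then $\eta \to 0$, one concludes that $l(\mu) = \lim_n \frac{1}{n}\int_G |g|_S\, d\mu^{*n}(g) = 0$ (the limit exists by Fekete's lemma applied to the subadditive sequence $a_n := \int_G |g|_S\, d\mu^{*n}$).

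To finish, I would invoke the Guivarc'h inequality $h(\mu) \leq v(G)\cdot l(\mu)$ between asymptotic entropy and drift, valid for compactly supported admissible symmetric measures on compactly generated unimodular locally compact groups, to deduce $h(\mu) = 0$; Kaimanovich's entropy criterion then yields the Liouville property. The main obstacle lies in Step~1, where the non-negativity of $\langle B_1, B_k\rangle$ is the crucial ingredient that turns a generic identity into a useful upper bound, and relies on the symmetry of $\mu$ together with the spectral analysis of the self-adjoint contraction $T_\mu$; the transition from zero drift to the Liouville property is classical in the discrete setting, and its extension to the unimodular locally compact case is precisely where the unimodularity hypothesis enters.
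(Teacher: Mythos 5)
Your proposal is correct and follows the same overall strategy as the paper: a Markov-type upper bound $\|b\|_{L^2(\mu^{*n})}^2\le n\|b\|_{L^2(\mu)}^2$ obtained from the telescoping identity and the non-negativity of the cross terms $\langle b(\mu),b(\mu^{*k})\rangle$, combined with the compression hypothesis to force $l(\mu)=0$, and then the entropy criterion to pass to the Liouville property. Two execution details differ and are worth noting. First, for the positivity of $\langle b(\mu), b(\mu^{*k})\rangle$ you apply the spectral theorem to the self-adjoint contraction $T_\mu$ and observe that $1+\lambda+\cdots+\lambda^{k-1}\ge 0$ on $[-1,1]$; the paper instead pairs consecutive terms of $\sum_j(n-1-j)\pi(A_\mu)^j b(\mu)$ by hand and bounds the odd-index terms by the even-index ones. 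Your spectral argument is cleaner and gives the same conclusion. Second, you deduce $l(\mu)=0$ directly by splitting $\int|g|_S\,d\mu^{*n}$ at level $N(\eta)$ and using $|g|_S\le\eta\,\rho_b(|g|_S)^2\le\eta\|b(g)\|^2$ on the tail; the paper argues by contraposition, first showing that $l(\mu)>0$ forces $\mathbb{P}(|X_n|_S\ge l(\mu)n/2)\ge 1/4$ and hence $\rho_b(n)\lesssim\sqrt{n}$. Your direct estimate bypasses the intermediate probabilistic proposition and is, if anything, more economical. For the final step both arguments rest on the same classical chain (zero drift implies zero asymptotic entropy via a Guivarc'h/Karlsson--Ledrappier type computation, and zero entropy implies Liouville); the paper is explicit that in the locally compact case one must define $h(\mu)$ via densities following Derriennic, which is the one point where your sketch should be made precise, but this does not affect the validity of the approach.
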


The main ingredient of the proof of this Theorem (Proposition \ref{general inequality in q-unif smooth}) is the same as the one that Naor and Peres \cite[1.1]{naor-peres} used for discrete groups. We revisit their arguments in the locally compact setting, although in a more cohomological viewpoint.

\paragraph{Outline of the paper}

Section \ref{Section: Preliminaries} contains preliminaries used in the subsequent sections. It introduces convexity properties on metric spaces and Banach spaces, group cohomology and cohomologically adapted probability measures. Most of its content is standard and well known, except for Lemma \ref{Lplem}. Section \ref{Section: Cocycles on ultraproducts} introduces ultraproducts of isometric actions on Banach spaces and is devoted to the proofs of Theorems \ref{LCFLp1} and \ref{Intro: Fast cocycle growth for actions on metric spaces}. Section \ref{Section: Growth of harmonic cocycles} introduces harmonic cocycles before presenting proofs of Theorems \ref{Intro: Growth of harmonic cocycles} and \ref{Intro: Not FLp and T implies fast growth on all cocycles}. Section \ref{Section: Compression of cocycles and drift} recalls standard facts on random walks before giving a proof of Theorem \ref{Intro: Compression implies drift=0}. Section \ref{Section: Questions} contains questions for further research directions. Appendix \ref{Section: Appendix} contains a proof of the technical Lemma \ref{separable Lp space inside E_U} needed in the proof of Theorem \ref{LCFLp1}.

\paragraph{Acknowledgements}

The first author is funded by the National Science Center Grant Maestro-13 UMO-
2021/42/A/ST1/00306. He wants to thank Piotr Nowak for many discussions around convexity on Banach spaces and Eduardo Silva for discussions on random walks. The second author wishes to thank his advisor Romain Tessera for the suggestion of the problem and encouragement and Tomas Ibarlucía for discussions about $L^p$-spaces. We both thank Mikael de la Salle for helping us with separability issues on $L^p$-spaces, most notably, he communicated to us Proposition \ref{separable Lp space inside E_U} and its proof, given in Appendix \ref{Section: Appendix}. We thank the anonymous referee for suggesting us to present our results in a more general setting and for many comments that improved the exposition of the text.

\section{Preliminaries}\label{Section: Preliminaries}

This section introduces convexity properties on metric spaces, group cohomology and cohomologically adapted probability measures. We first introduce uniformly convex and uniformly smooth metric spaces. We then focus on the case of Banach spaces and introduce non-linear duality mappings.
We then introduce standard group cohomology vocabulary. We conclude this section by introducing cohomologically adapted probability measures.

\subsection{Uniformly convex and uniformly smooth metric spaces}

\begin{defn}
    We say that a complete metric space $(E,d)$ is \textit{uniquely geodesic} if for every two points $x, y \in E$, there exists a unique geodesic $\gamma$ between $x$ and $y$; that is, a path $\gamma \colon [0,1]\to E$ such that $d(\gamma(s),\gamma(t))=|s-t|d(x,y)$ for all $s,t \in [0,1]$. We will denote by $[x_0,x_1]$ this unique geodesic and by  $m_{xy}$ the middle point of the geodesic $[x,y]$.
\end{defn}

All Banach spaces are examples of complete uniquely geodesic metric spaces. We now start to narrow down the family of spaces we will work with, by introducing convexity properties.

\begin{defn}
Let $(E,d)$ be a uniquely geodesic complete metric space, we say that $E$ is a \textbf{uniformly convex metric space} if for each $\epsilon > 0$, there exists $\delta>0$ such that for all $x,y,z$ in  $E$ with $d(x,y) \geq \eps \max(d(x,z),d(y,z))$, we have that:
\begin{equation*}
    d(z,m_{xy}) \leq (1-\delta) \max(d(x,z),d(y,z)).
\end{equation*}
    
\end{defn}

These spaces are important because they are our minimal requirement to define a notion of center and radius of bounded subsets.

\begin{defn}
Given $(E,d)$ a uniformly convex metric space, for any bounded subset $A$ we define $c(A)$ and $\rho(A)$ as the center and the radius of the ball of smallest radius containing $A$.
\end{defn}

 We now introduce the two main classes of metric spaces we will be dealing with, that is, $p$-uniformly convex and $q$-uniformly smooth metric spaces.

\begin{defn} \label{Def: p unif convex metric space}
     We say that a uniquely geodesic complete metric space $(E,d)$ is $(p,c)$\textit{-uniformly convex} if for all points $x_0,x_1,z \in E$, we have that:
 \begin{equation*}
    d(z,x_t)^p\leq (1-t)d(z,x_0)^p + td(z,x_1)^p - ct(1-t)d(x_0,x_1)^p,
\end{equation*}
where $x_t$ is the linear parametrization of the segment $[x_0,x_1]$ for all $t \in [0,1]$. We will say that $(E,d)$ is \textit{$p$-uniformly convex} if it is $(p,c)$-uniformly convex for some $c>0$.
\end{defn}

\begin{defn} \label{Def: q unif smooth metric space}
     We say that a uniquely geodesic complete metric space $(E,d)$ is $(q,d)$\textit{-uniformly smooth} if for all points $x_0,x_1,z \in E$, we have that:
 \begin{equation*}
    d(z,x_t)^q\geq (1-t)d(z,x_0)^q + td(z,x_1)^q - dt(1-t)d(x_0,x_1)^q,
\end{equation*}
where $x_t$ is the linear parametrization of the segment $[x_0,x_1]$ for all $t \in [0,1]$. We will say that $(E,d)$ is \textit{$q$-uniformly smooth} if it is $(q,d)$-uniformly smooth for some $d>0$.
\end{defn}

The classical examples of such spaces are standard $L^p$-spaces. 

\begin{prop} \cite[Ch. 4 and 5]{chidume}
   Let $(\Omega, \mu)$ be some standard Borel space. \\ 
   $\bullet$ For $1<p \leq 2$, the space $L^p(\Omega, \mu)$ is $2$-uniformly convex and $p$-uniformly smooth. \\
   $\bullet$ For $2 \leq p < \infty$ the space $L^p(\Omega, \mu)$ is $p$-uniformly convex and $2$-uniformly smooth. 
\end{prop}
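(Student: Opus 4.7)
The plan is to reduce all four claims to Clarkson-type inequalities, which are the standard tools for uniform convexity and smoothness estimates in $L^p$-spaces. Since the defining inequalities in Definitions~\ref{Def: p unif convex metric space} and~\ref{Def: q unif smooth metric space} are translation invariant, I would first set $z = 0$, $u = x_0$, $v = x_1$, and reformulate the $(p, c_E)$-uniform convexity statement as the claim
\begin{equation*}
\|(1-t)u + tv\|_p^{\,p} \leq (1-t)\|u\|_p^{\,p} + t\|v\|_p^{\,p} - c_E\, t(1-t)\|u-v\|_p^{\,p}
\end{equation*}
for all $u, v \in L^p$ and $t \in [0,1]$, with the reversed inequality (exponent $q$, constant $d_E$) for smoothness.

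For $p \geq 2$, the natural starting point would be the midpoint version $t = 1/2$, namely Clarkson's second inequality
\begin{equation*}
\Bigl\|\tfrac{u+v}{2}\Bigr\|_p^{\,p} + \Bigl\|\tfrac{u-v}{2}\Bigr\|_p^{\,p} \leq \tfrac{1}{2}\bigl(\|u\|_p^{\,p} + \|v\|_p^{\,p}\bigr),
\end{equation*}
which is obtained by integrating its pointwise scalar counterpart over $(\Omega, \mu)$. To pass from $t = 1/2$ to arbitrary $t \in [0,1]$, I would apply a dyadic bootstrap: iterating the midpoint inequality along successive midpoints approximating $(1-t)u + tv$ and summing telescopically produces the general inequality, at the cost of a possibly smaller but still positive constant $c_E = c_E(p)$. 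The $2$-uniform smoothness of $L^p$ for $p \geq 2$ is obtained in parallel, starting from the $L^p$ type~$2$ inequality (a reversed Clarkson-type estimate reflecting that $L^p$ embeds isometrically into a space of type~$2$) and bootstrapping by the same dyadic procedure.

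For $1 < p \leq 2$ the scalar pointwise approach fails because the relevant Clarkson inequality flips direction. Here the key input would be the sharp Ball--Carlen--Lieb $2$-uniform convexity inequality
\begin{equation*}
\Bigl\|\tfrac{f+g}{2}\Bigr\|_p^{\,2} + (p-1)\Bigl\|\tfrac{f-g}{2}\Bigr\|_p^{\,2} \leq \tfrac{1}{2}\bigl(\|f\|_p^{\,2} + \|g\|_p^{\,2}\bigr),
\end{equation*}
which directly gives $2$-uniform convexity at the midpoint with constant $p-1$. Dyadic bootstrapping again extends this to arbitrary $t$, and the $p$-uniform smoothness in this range follows by a duality argument applied to the first Clarkson inequality (which remains valid for $1 < p \leq 2$), using the standard correspondence between $p$-uniform smoothness of a space and $p'$-uniform convexity of its dual.

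The main obstacle in a fully self-contained proof is the bookkeeping in the dyadic bootstrap: one must preserve the sharp exponent $p$ (resp.\ $q$) on the right-hand side while guaranteeing a strictly positive constant in the correction term. Since these quantitative estimates, together with the identification of optimal constants, are worked out in detail in Chidume's monograph \cite{chidume}, Chapters~4 and~5, the cleanest route is to cite them directly rather than reproduce the calculation.
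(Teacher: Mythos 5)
The paper gives no proof of this proposition---it is imported verbatim from Chidume, Chapters 4 and 5---and your outline correctly reconstructs the standard route such references take: reduce to the translation-invariant two-point form, establish the midpoint case via Clarkson (for $p\ge 2$) and Ball--Carlen--Lieb (for $1<p\le 2$), and extend to general $t$ by the dyadic bootstrap (an argument the authors themselves carry out in a commented-out lemma of their source, obtaining the coefficient $\min\{t,1-t\}\ge t(1-t)$). The one imprecision is your appeal to ``type $2$'' for the $2$-uniform smoothness of $L^p$, $p\ge 2$: Rademacher type $2$ is strictly weaker than $2$-uniform smoothness, and the correct midpoint input here is the reversed (dual) Ball--Carlen--Lieb inequality $\tfrac12\bigl(\lVert f\rVert_p^2+\lVert g\rVert_p^2\bigr)\le\bigl\lVert\tfrac{f+g}{2}\bigr\rVert_p^2+(p-1)\bigl\lVert\tfrac{f-g}{2}\bigr\rVert_p^2$; since you defer the quantitative bookkeeping to Chidume in any case, this does not affect the conclusion.
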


%It is known that $L^p$ is a $p$-uniformly convex metric space for $p\geq 2$, and that $L^p$ is $2$-uniformly convex metric space for $1<p\leq 2$ (see for example \cite{chidume}).
%Moreover it is known as well that in the case of normed Banach spaces, any uniformly convex normed space is equivalent to a $p$-uniformly convex normed space for some $p\geq 2$ \cite{Pisier}.

$CAT(0)$-cube complexes with $l^s$-metrics for $1<s<\infty$ as constructed in \cite{Nima} are examples of nonlinear $p$-uniformly convex and $q$-uniformly smooth metric spaces (where again $p = \max\{2,s\}$ and $q = \min \{2, s\}$).

We have the following lemma for $p$-uniformly convex metric spaces, that generalizes \cite[Lemma 4.7.1]{CornulierrelativepropT} from CAT($0$) spaces to $p$-uniformly convex metric spaces.

\begin{lem}
\label{Lplem}
Let $E$ be a $(p,c_E)$-uniformly convex metric space, $A$ any bounded subset of $E$ which is included in some other bounded subset $B$ of $E$. We then have the following inequality:
\begin{equation*}
    c_Ed(c(B),c(A))^p \leq (\rho(B)^p- \rho(A)^p). %\dfrac{r^p-\rho(A)^p}{2^{p-1}}
\end{equation*}
\end{lem}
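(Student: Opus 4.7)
The plan is to apply the $(p,c_E)$-uniform convexity inequality along the geodesic joining the two circumcenters $c(B)$ and $c(A)$, using points of $A$ as test points. For $t\in[0,1]$, let $x_t$ denote the point at parameter $t$ on the geodesic $[c(B),c(A)]$. For any $z\in A\subseteq B$ one has $d(z,c(B))\leq \rho(B)$ and $d(z,c(A))\leq \rho(A)$, so plugging these into Definition \ref{Def: p unif convex metric space} with $x_0=c(B)$ and $x_1=c(A)$ yields
\begin{equation*}
d(z,x_t)^p \;\leq\; (1-t)\rho(B)^p + t\,\rho(A)^p - c_E\,t(1-t)\,d(c(B),c(A))^p
\end{equation*}
for every $z\in A$.

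Since this bound is uniform in $z$, the set $A$ is contained in the closed ball centered at $x_t$ whose radius $R_t$ satisfies $R_t^p$ equal to the right hand side. By definition of $\rho(A)$ as the infimum of radii of closed balls containing $A$, one has $\rho(A)^p\leq R_t^p$, which after rearrangement reads
\begin{equation*}
(1-t)\rho(A)^p + c_E\,t(1-t)\,d(c(B),c(A))^p \;\leq\; (1-t)\rho(B)^p.
\end{equation*}
Dividing by $(1-t)$ for $t\in[0,1)$ gives $\rho(A)^p + c_E\,t\,d(c(B),c(A))^p\leq \rho(B)^p$, and letting $t\to 1^{-}$ produces the claimed inequality.

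The only subtle point in this argument is the choice of $t$: a naive use of the midpoint $t=1/2$ would only yield the weaker bound with constant $c_E/2$, and the optimal constant $c_E$ is recovered by letting the sliding center $x_t$ tend to $c(A)$ itself, exploiting the fact that the genuine circumcenter of $A$ in $E$ must perform at least as well as $x_t$ when used as a center for $A$.
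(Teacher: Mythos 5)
Your proof is correct and rests on the same two ingredients as the paper's: the $(p,c_E)$-convexity inequality applied along the geodesic $[c(B),c(A)]$ with points of $A$ as test points, together with the minimality of $\rho(A)$. The only difference is organizational — the paper argues by contradiction and locates the minimum of the resulting quadratic in $t$, whereas you divide by $(1-t)$ and let $t\to 1^-$, which reaches the sharp constant a bit more directly.
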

\begin{proof}
Since $A\subset B\subset B(c(B),\rho(B))$, it is clear that $\rho(A)\leq \rho(B)$. Let's suppose that the conclusion of the lemma is false. In this case we can't have that $c(A)=c(B)$, so we can suppose that $c(A)$ is different from $c(B)$. Let's denote by $d$ the distance between $c(A)$ and $c(B)$ and by $c_t$ for $t$ in $[0,1]$ the point in $[c(A),c(B)]$ such that $d(c(A),c_t)=td$ and $d(c_t,c(B))= (1-t)d$. Moreover let's consider as well any point $a$ in $A$. We can see all these informations on the following figure:
\begin{figure}[h]
    \centering
    \tikzset{every picture/.style={line width=0.75pt}} %set default line width to 0.75pt        

\begin{tikzpicture}[x=0.75pt,y=0.75pt,yscale=-1,xscale=1]
%uncomment if require: \path (0,304); %set diagram left start at 0, and has height of 304

%Straight Lines [id:da10836572314047233] 
\draw    (244,79) -- (446,78.5) ;
%Straight Lines [id:da6615819897035256] 
\draw    (244,79) -- (396,180.5) ;
%Straight Lines [id:da4497507280988362] 
\draw    (446,78.5) -- (396,180.5) ;
%Straight Lines [id:da5263101905886808] 
\draw    (315,78.5) -- (396,180.5) ;

% Text Node
\draw (207,69.4) node [anchor=north west][inner sep=0.75pt]    {$c( A)$};
% Text Node
\draw (309,59.4) node [anchor=north west][inner sep=0.75pt]    {$c_{t}$};
% Text Node
\draw (388,183.4) node [anchor=north west][inner sep=0.75pt]    {$a$};
% Text Node
\draw (451,64.4) node [anchor=north west][inner sep=0.75pt]    {$c( B)$};
% Text Node
\draw (290.76,115.56) node [anchor=north west][inner sep=0.75pt]  [rotate=-33.57]  {$\leq \rho ({A}) \ $};
% Text Node
\draw (415.42,142.85) node [anchor=north west][inner sep=0.75pt]  [rotate=-298.49]  {$\leq \rho ({B})$};
% Text Node
\draw (268,59.4) node [anchor=north west][inner sep=0.75pt]    {$td$};
% Text Node
\draw (359,57.4) node [anchor=north west][inner sep=0.75pt]    {$( 1-t) d$};
\end{tikzpicture}
\end{figure}

We then have that for any point $a$ in $A$:        
\begin{equation*}
    d(a,c_{t})^{p}\leq (1-t) d(a,c(A))^{p}+td(a,c(B))^{p}-c_{E}t(1-t)d^{p}
\end{equation*}
We know as well that $a\in B(c(A),\rho(A))$ and since $A\subset B$, we have as well that $a\in B(c(B),\rho(B))$. This will give us the following bound:
\begin{equation*}
    d(a,c_t)^p\leq (1-t)\rho(A)^p + t \rho(B)^p -c_Et(1-t)d^p=h(t) 
\end{equation*}
Furthermore, if we suppose that $c_Ed^p > (\rho(B)^p-\rho(A)^p)$, we then get that the function 
\begin{equation*}
    h(t)= c_Ed^pt^2-(\rho(A)^p-\rho(B)^p+c_Ed^p)t +\rho(A^p)
\end{equation*} has its minimal value at $t_0= \dfrac{\rho(A)^p-\rho(B)^p+c_Ed^p}{2c_Ed^p}$ that belongs to $[0,1/2]$ and that $h(t_0)<\rho(A)^p$. We then have that for all $a$ in $A$:
\begin{equation*}
    d(a,c_{t_0})^p < \rho(A)^p.
\end{equation*}
Which contradicts the fact that $B(c(A),\rho(A))$ is the ball of minimal radius that contains $A$.
\end{proof}

\begin{comment}

An important general inequality we will use is the following.

\begin{prop} \cite[4.10]{chidume}
    Let $X$ be a smooth Banach space and $1<r< \infty$.
    For any $x, y \in X$ we have:
    \begin{equation*}
        ||x+y||^r \leq ||x||^r + r \langle y, (x+y)^{*_r} \rangle.
    \end{equation*}
\end{prop}
    
\end{comment}

\subsection{Duality mappings in smooth Banach spaces}

We are particularly interested in the case where our $p$-uniformly convex and $q$-uniformly smooth metric spaces are Banach spaces. An important characterization of these Banach spaces requires the introduction of duality mappings.
We follow the presentation in \cite{chidume}. 

Let $E$ be a real Banach space. We denote by $E^*$ its continuous dual with its norm $||f||_{E^*} : = \sup_{x \in E , ||x||_E = 1} |f(x)|$ for $f \in E^*$.

We denote by $\langle \cdot, \cdot \rangle: E \times E^{*} \to \re $ the natural pairing between $E$ and $E^*$ defined by $\langle x, f \rangle : = f(x)$, where $x \in E$ and $f \in E^*$.

\begin{defn}
    A Banach space $E$ is said to be \textit{smooth} if there exists a \textit{normalized duality mapping} $* : E \to E^*, x \mapsto x^*$ uniquely determined by the conditions $||x^*||_{E^*} = ||x||_E$ and $\langle x, x^* \rangle = ||x||_E^2$ for every $x \in E$.
\end{defn}

As the terminology indicates, a $q$-uniformly smooth Banach space ($1<q\leq 2$) is automatically smooth \cite[2.6]{chidume}.

\begin{rem}
    The map $* : E \to E^*$ is linear if and only if $E$ is a Hilbert space. In general it only satisfies $(\lambda x)^* = \l x^*$ for $x \in E$ \cite[3.7]{chidume}. In the case of an $L^p$-space, the normalized duality mapping is just the (normalized) Mazur map $L^p \to L^q, f \to M f$ defined by $M f (x) = \mathrm{sgn} (f(x)) |f(x)|^{p-1} ||f||_p^{2-p}$.
\end{rem}

For $1<r< \infty$, we write \begin{equation*}
    x^{*_r} := ||x||^{r-2} x^* \in E^*
\end{equation*} for $x \in E, x\neq 0$ and $x^{*_r} = 0$ for $x = 0$. 

The results in Section \ref{Section: Growth of harmonic cocycles} will follow from the following inequality.

\begin{prop}\label{p-unif convex inequality} \cite[4.17]{chidume}
    Let $E$ be a smooth $(p, c_E)$-uniformly convex Banach space. For all $x, y \in E$ we have:
    \begin{equation*}
        ||x+y||^p \geq ||x||^p + p \langle y , x^{*_p} \rangle +  c_E||y||^p.
    \end{equation*}
\end{prop}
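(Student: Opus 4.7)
The plan is to derive the infinitesimal inequality by specializing the defining inequality of $(p,c_E)$-uniform convexity from Definition \ref{Def: p unif convex metric space} to a well-chosen triangle, rearranging, and taking a one-sided limit where smoothness of $E$ kicks in to produce the duality pairing.

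First, I would apply the $(p,c_E)$-uniform convexity inequality with base point $z = -x$, endpoints $x_0 = 0$ and $x_1 = y$, and parameter $t \in (0,1)$. Since $E$ is a Banach space the geodesic is linear, so $x_t = ty$, $d(z,x_t) = ||x+ty||$, $d(z,x_0) = ||x||$, $d(z,x_1) = ||x+y||$ and $d(x_0,x_1) = ||y||$. The inequality becomes
\begin{equation*}
||x + ty||^p \leq (1-t)||x||^p + t||x+y||^p - c_E t(1-t)||y||^p.
\end{equation*}
Rearranging to isolate $||x+y||^p$ and dividing by $t > 0$ gives
\begin{equation*}
||x+y||^p \geq ||x||^p + \frac{||x+ty||^p - ||x||^p}{t} + c_E(1-t)||y||^p.
\end{equation*}

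Next, I would let $t \to 0^+$. The outer terms converge to $||x||^p$ and $c_E||y||^p$ respectively, so the entire argument reduces to identifying
\begin{equation*}
\lim_{t \to 0^+} \frac{||x+ty||^p - ||x||^p}{t} = p \langle y, x^{*_p}\rangle.
\end{equation*}
When $x \neq 0$, smoothness of $E$ guarantees that the norm is Gâteaux differentiable at $x$, with derivative in direction $y$ equal to $\langle y, x^*\rangle/||x||$. Applying the chain rule to the $p$-th power yields $p||x||^{p-1}\cdot\langle y, x^*\rangle/||x|| = p||x||^{p-2}\langle y, x^*\rangle = p\langle y, x^{*_p}\rangle$, matching exactly the right-hand side of the proposition.

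The degenerate case $x = 0$ must be handled separately: there $x^{*_p} = 0$ and the inequality reduces to $||y||^p \geq c_E ||y||^p$, which follows from the fact that $c_E \leq 1$, itself obtained by setting $z = x_0$ in Definition \ref{Def: p unif convex metric space} and letting $t \to 0$. The only real obstacle is the Gâteaux differentiability step: one must recall that smoothness of $E$ is precisely what makes the directional derivative of $||\cdot||$ coincide with $\langle \cdot, x^*\rangle/||x||$, so that the normalized duality mapping appears naturally in the limit. This is standard material (see \cite{chidume}), so the overall proof is a short passage to the infinitesimal from the chordal convexity inequality.
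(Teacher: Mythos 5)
Your argument is correct. The paper does not prove this proposition at all: it is imported as a black box from Chidume's book (cited as [4.17] there), where it is established for $p$-uniformly convex spaces defined via the modulus of convexity. What you do instead is derive the inequality directly from the paper's own Definition \ref{Def: p unif convex metric space}: specializing the chordal convexity inequality to $z=-x$, $x_0=0$, $x_1=y$, dividing by $t$, and letting $t\to 0^+$, with smoothness supplying the identification of the one-sided derivative of $t\mapsto \|x+ty\|^p$ with $p\langle y, x^{*_p}\rangle$. All the steps check out: the rearrangement is right, the Gateaux-derivative computation $\frac{d}{dt}\|x+ty\|^p\big|_{t=0}=p\|x\|^{p-2}\langle y,x^*\rangle$ is the standard consequence of single-valuedness of the duality map (the one-sided limit exists by convexity even without smoothness, and smoothness pins it to the unique support functional), and you correctly isolate the degenerate case $x=0$, where the needed bound $c_E\le 1$ does follow from the definition as you indicate. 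Your route buys two things the citation does not: it is self-contained relative to the definitions actually stated in the paper, and it produces the inequality with literally the same constant $c_E$ appearing in Definition \ref{Def: p unif convex metric space}, so there is no need to reconcile the paper's normalization of the uniform convexity constant with the one used in the cited reference. The only point worth flagging is that you should make explicit that the identity ``Gateaux derivative of the norm at $x\ne 0$ in direction $y$ equals $\langle y, x^*\rangle/\|x\|$'' is the standard equivalence between smoothness and differentiability of the norm, since the paper defines smoothness via uniqueness of the duality mapping rather than via differentiability; but this is indeed standard and a citation suffices.
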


Uniformly smooth spaces satisfy an inequality similar to that of uniformly convex spaces given in Proposition \ref{p-unif convex inequality}.

\begin{prop}\label{q-unif smooth inequality} \cite[5.8]{chidume}
  Let $E$ be a $(q, d_E)$-uniformly smooth Banach space. For all $x, y \in E$ we have:
    \begin{equation*}
        ||x+y||^q \leq ||x||^q + q \langle y , x^{*_q} \rangle +  d_E||y||^q.
    \end{equation*}
\end{prop}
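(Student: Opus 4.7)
The plan is to derive the inequality by applying the defining inequality of $(q,d_E)$-uniform smoothness along a well-chosen geodesic segment and then letting the parameter tend to $0$, recovering the duality term as a directional derivative. Fix $x,y\in E$; the case $x=0$ will need to be checked separately.

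First I would apply Definition \ref{Def: q unif smooth metric space} to the points $z=0$, $x_0 = x$ and $x_1 = x+y$. Since $x_t = (1-t)x + t(x+y) = x+ty$ and $d(x_0,x_1)=\|y\|$, this gives, for every $t\in[0,1]$:
\begin{equation*}
    \|x+ty\|^q \;\geq\; (1-t)\|x\|^q + t\|x+y\|^q - d_E\, t(1-t)\|y\|^q.
\end{equation*}
Rearranging to isolate $\|x+y\|^q$ yields, for every $t\in (0,1]$:
\begin{equation*}
    \|x+y\|^q \;\leq\; \|x\|^q + \frac{\|x+ty\|^q-\|x\|^q}{t} + d_E(1-t)\|y\|^q.
\end{equation*}

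Next I would identify the limit of the difference quotient as $t\to 0^+$. Since $(q,d_E)$-uniformly smooth Banach spaces are smooth, the normalized duality map $x\mapsto x^*$ is well defined and single-valued, and a standard computation using the chain rule on $t\mapsto \|x+ty\|$ shows that, for $x\neq 0$,
\begin{equation*}
    \lim_{t\to 0^+}\frac{\|x+ty\|^q-\|x\|^q}{t} \;=\; q\|x\|^{q-2}\langle y,x^*\rangle \;=\; q\langle y,x^{*_q}\rangle.
\end{equation*}
Passing to the limit $t\to 0^+$ in the previous inequality then gives exactly the desired conclusion.

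It remains to handle $x=0$. Here $x^{*_q}=0$ and the target inequality reduces to $\|y\|^q\leq d_E\|y\|^q$, so it suffices to check that $d_E\geq 1$; this follows directly by specializing the uniform smoothness inequality to $z=x_0=0$, $x_1=u\neq 0$, and letting $t\to 0^+$, which forces $d_E\geq 1$. The main obstacle I anticipate is justifying the derivative computation $\frac{d}{dt}\big|_{t=0}\|x+ty\|^q = q\langle y,x^{*_q}\rangle$ rigorously; this relies on the fact that in a smooth Banach space the norm is Gâteaux differentiable away from the origin with gradient $x^*/\|x\|$, a standard but nontrivial consequence of the uniqueness of the normalized duality mapping. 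Once that identification is in hand, the rest of the argument is a direct passage to the limit.
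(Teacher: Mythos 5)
Your argument is correct, and it is worth noting that the paper itself gives no proof of this statement: Proposition \ref{q-unif smooth inequality} is simply quoted from Chidume's book, where it is obtained from the modulus-of-smoothness characterization of $q$-uniform smoothness. What you do instead is derive the inequality directly from the paper's own Definition \ref{Def: q unif smooth metric space} (the two-point inequality along geodesics), by choosing $z=0$, $x_0=x$, $x_1=x+y$, isolating $\|x+y\|^q$, and letting $t\to 0^+$ so that the difference quotient $t^{-1}(\|x+ty\|^q-\|x\|^q)$ converges to $q\langle y,x^{*_q}\rangle$. This is a clean, self-contained route that moreover shows the constant in the conclusion is exactly the $d_E$ appearing in the definition, with no loss. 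The one ingredient you correctly flag as nontrivial is the identification of that one-sided limit with the duality pairing: it rests on the fact that in a smooth Banach space the norm is G\^ateaux differentiable at every $x\neq 0$ with derivative $y\mapsto \langle y,x^*\rangle/\|x\|$, combined with the chain rule for $s\mapsto s^q$ (legitimate since $\|x+ty\|>0$ near $t=0$); the paper supplies the needed smoothness via its remark that $q$-uniformly smooth spaces are smooth. Your treatment of the degenerate case $x=0$, reducing it to $d_E\geq 1$ and extracting that bound from the defining inequality with $z=x_0=0$ and $t\to 0^+$ (using $q>1$), is also correct and is a detail one could easily overlook. In short: the citation-based statement and your derivation prove the same inequality, but yours has the advantage of being internal to the paper's definitions, at the cost of invoking the standard equivalence between smoothness and G\^ateaux differentiability of the norm.
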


\subsection{Isometric representations on Banach spaces and 1-cohomology}

We now introduce some group cohomology vocabulary. Standard references include \cite[2.2]{bekka-delaHarpe-valette}.

Let $G$ be a locally compact second countable group. We say that a representation $\pi: G \to \mathrm{GL}(E)$ of $G$ on a Banach space $E$ is \textit{continuous} (or \textit{strongly continuous}) if the map $G \times E \to E, (g, x) \mapsto \pi(g)x$ is continuous when $E$ carries the strong topology. It is \textit{isometric} if $\pi(g) \in \mathcal{O}(E)$ for all $g \in G$, where $\mathcal{O}(E)$ denotes the group of linear isometries of $E$. 

\begin{defn}
Let $\pi: G \to \mathcal{O}(E)$ be a continuous isometric representation of $G$ on a Banach space $E$.
   A \textit{$1$-cocycle} for the representation $\pi$ is a continuous map $b : G \to E$ satisfying the \textit{cocycle relation}:
\begin{equation*}
    b(gh) = b(g) + \pi(g) b(h),
\end{equation*}
for every $g, h \in G$. The space of 1-cocycles for $\pi$ is denoted by $Z^1(G, \pi)$. 
\end{defn}

Similarly, one defines \textit{1-coboundaries} as maps $b: G \to E$ for which there exists some $v\in E$ such that $b(g) = v - \pi(g) v$ for $g \in G$ and denote by $B^1(G, \pi)$ the space of all 1-coboundaries associated to $\pi$. The space $B^1(G,\pi)$ is a subspace of $Z^1(G, \pi)$ and we denote by $H^1_\mathrm{ct}(G, \pi) = Z^1(G, \pi) / B^1(G, \pi)$ the \textit{first continuous cohomology group} of $\pi$. 

All 1-coboundaries are bounded. If the Banach space $E$ is uniformly convex, then the converse also holds, that is, bounded 1-cocycles are exactly 1-coboundaries.

By considering an increasing family of compact subsets $(Q_n)_n$ of $G$, such that $\bigcup_n Q_n = G$, we see that $Z^1(G, \pi)$ is a Fréchet space for the family of seminorms $||b||_{Q_n} = \sup_{g \in Q_n} ||b(g)||$.

If the group $G$ is moreover compactly generated with compact generating set $S$, we endow $G$ with the word length $|\cdot |_S$ on $S$ defined by $|g|_S = \min \{ n \in \na , g \in S^n \}$ and the cocycle relation guarantees that the function $||b||_S = \sup_{g \in S} ||b(g)||$ is a norm on $Z^1(G, \pi)$, and the space $Z^1(G, \pi)$ endowed with $||\cdot||_S$ is a Banach space.

%$Z^1(G, \pi)$ is a Banach space for the norm $||b||_S = \sup_{g \in S} ||b(g)||$.

\subsection{Cohomologically adapted probability measures}

Let $G$ be a compactly generated locally compact group, $S$ be a compact generating set of $G$ and endow $G$ with the word length $|\cdot |_S$ on $S$ defined by
\begin{equation*}
    |g|_S = \min \{ n \in \na , g \in S^n \}
\end{equation*}
for $g \in G$. Let $\pi$ be a continuous isometric representation of $G$ on a Banach space $E$. For $b \in Z^1(G, \pi)$ define $||b||_S = \sup_{g \in S} ||b(g)||_E$. 

Let $\mu \in \Prob(G)$ be a probability measure on $G$. For $p >0$, we say that $\mu$ has \textit{finite $p$-moment} if:
\begin{equation*}
    \int_G |g|_S^p \, \mathrm{d} \mu (g) < \infty.
\end{equation*}

Under this condition, we define for the same value of $p>1$:
\begin{equation*}
    ||b||_{L^p(\mu)} :=  \Big( \int_G ||b(g)||_E^p \mathrm{d} \mu (g) \Big)^{\frac{1}{p}}.
\end{equation*}

The following definition encapsulates all the hypotheses we will further need.

\begin{defn} \cite[Definition 1]{bekka-harmonic2017} \label{Defn Cohomologically Adapted} Let $G$ be a compactly generated locally compact group with compact generating set $S$.
We say that a probability measure $\mu$ on $G$ is \textit{cohomologically adapted} if: \\
$\bullet$ $\mu$ is symmetric, \\
$\bullet$ $\mu$ is absolutely continuous with respect to the Haar measure $m_G$ on $G$, \\
$\bullet$ the support of $\mu$ contains $S$, \\
$\bullet$ $\inf_{x \in S} \frac{\mathrm{d} \mu}{\mathrm{d} m_G}(x) > 0$, \\
$\bullet$ $\mu$ has finite $p$-moment for every $1 \leq p < \infty$.
    
\end{defn}

The simplest example of such a probability measure is, for a unimodular group $G$, the restriction of the Haar measure $m_G$ to a compact symmetric generating subset. For a non-unimodular group $G$ with modular function $\D_G$, one considers the restriction of the measure $\Delta_G(g)^{-1/2} \mathrm{d} m_G(g)$ to a compact symmetric generating subset.

The following proposition states that given a cohomologically adapted probability and an isometric representation on a separable Banach space, the norms on the space of $1$-cocycles we have defined so far are equivalent. This result and the separability assumptions are not required in our further results, as these do not need to exchange these norms.

\begin{prop} Let $\mu$ be a cohomologically adapted probability measure on $G$ and assume that $E$ is a separable Banach space. Then for every $p \geq 1$, the function $||\cdot||_{L^p(\mu)}$ is a norm on $Z^1(G, \pi)$, so that $(Z^1(G, \pi), ||\cdot||_{L^p(\mu)})$ is a Banach space and the norms $||\cdot||_S$ and $||\cdot||_{L^p(\mu)}$ are equivalent. 
\end{prop}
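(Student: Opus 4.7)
The plan is to reduce the three claims (that $||\cdot||_{L^p(\mu)}$ is a norm, that it is complete, and that it is equivalent to $||\cdot||_S$) to the two inequalities $||b||_{L^p(\mu)} \leq C_1 ||b||_S$ and $||b||_S \leq C_2 ||b||_{L^p(\mu)}$, together with the definiteness $||b||_{L^p(\mu)} = 0 \Rightarrow b \equiv 0$. Once these are in place, completeness of $||\cdot||_{L^p(\mu)}$ is automatic from the completeness of $||\cdot||_S$ recalled in the preliminaries: an $L^p(\mu)$-Cauchy sequence in $Z^1(G, \pi)$ is then also $||\cdot||_S$-Cauchy, hence converges in $||\cdot||_S$ to an element of $Z^1(G, \pi)$ which is also the $L^p(\mu)$-limit.

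For the upper bound I would iterate the cocycle relation: since $\pi$ is isometric, $||b(gh)|| \leq ||b(g)|| + ||b(h)||$, so by induction $||b(g)|| \leq |g|_S \, ||b||_S$. Integrating against $\mu$ and invoking the finite $p$-moment yields $||b||_{L^p(\mu)} \leq C_1 ||b||_S$ with $C_1 = (\int_G |g|_S^p \, d\mu(g))^{1/p}$. Definiteness is then immediate: if $||b||_{L^p(\mu)} = 0$ then $b$ vanishes $\mu$-almost everywhere, hence on $\mathrm{supp}(\mu) \supseteq S$ by continuity of $b$, and finally on all of $G$ since the cocycle relation propagates zeroes from any generating set.

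The bulk of the work is the reverse inequality. The key identity, obtained by rearranging $b(vg) = b(v) + \pi(v)b(g)$, is
\begin{equation*}
    b(g) = \pi(v^{-1})\bigl(b(vg) - b(v)\bigr), \qquad \text{so} \qquad ||b(g)|| \leq ||b(vg)|| + ||b(v)||
\end{equation*}
for every $v \in G$. Raising to the $p$-th power, applying $(a+b)^p \leq 2^{p-1}(a^p+b^p)$, and integrating $v$ over a compact neighborhood $V$ of $e$ with $\mu(V) > 0$ would give
\begin{equation*}
    \mu(V) \, ||b(g)||^p \leq 2^{p-1}\left(\int_V ||b(vg)||^p \, d\mu(v) + ||b||_{L^p(\mu)}^p\right).
\end{equation*}
I would then push the first integral back to $||b||_{L^p(\mu)}^p$ via the change of variables $w = vg$: writing $d\mu = f \, dm_G$ with $f \geq c > 0$ on $S$, and tracking the Jacobian through the modular function $\Delta_G$ (bounded on the compact set $S$), the aim is to obtain $\int_V ||b(vg)||^p \, d\mu(v) \leq M \, ||b||_{L^p(\mu)}^p$ uniformly in $g \in S$. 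Taking the supremum over $g \in S$ then yields $||b||_S \leq C_2 \, ||b||_{L^p(\mu)}$.

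The hard part will be this change-of-variables estimate: the translate $Vg$ need not sit inside $S$ (where the density lower bound is available), so $V$ must be chosen carefully, ideally as a compact neighborhood of $e$ inside $S$ on which $f$ is both bounded above and below, with $Vg$ still contained in a region of positive density for every $g \in S$. The separability of $E$ is needed only to ensure that the $E$-valued integrals appearing above are Bochner integrable in the usual sense. Once both inequalities are established, the Banach space structure of $(Z^1(G, \pi), ||\cdot||_{L^p(\mu)})$ follows from its equivalence with the known Banach norm $||\cdot||_S$.
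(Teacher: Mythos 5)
Your overall strategy --- prove the two-sided comparison of norms directly and then deduce completeness of $\|\cdot\|_{L^p(\mu)}$ from that of $\|\cdot\|_S$ --- is a legitimate alternative to what the paper does, and your easy half ($\|b\|_{L^p(\mu)} \le (\int_G |g|_S^p\, \mathrm{d}\mu)^{1/p}\|b\|_S$) and the definiteness argument are both correct. But the entire weight of your proof rests on the reverse inequality $\|b\|_S \le C_2\|b\|_{L^p(\mu)}$, and the step you yourself flag as ``the hard part'' does not go through as sketched. Your change of variables needs $f(wg^{-1}) \le M f(w)$ for $w \in Vg$, i.e.\ an upper bound for the density $f = \mathrm{d}\mu/\mathrm{d}m_G$ on $V$ and a lower bound on $Vg$. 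Neither is available: Definition \ref{Defn Cohomologically Adapted} gives a lower bound for $f$ only on $S$ and no upper bound anywhere --- $f$ may be unbounded on every neighbourhood of $e$ (take $G=\re$, $S=[-1,1]$ and $f(x)\sim c+|x|^{-1/2}$ near $0$), so no compact neighbourhood $V$ of $e$ with $f$ bounded above need exist; and $Vg\subseteq S^2$ can leave $S$, where $f$ may vanish. A repair is to avoid the density except through its lower bound on $S$: for $g\in S$ take $v$ in the set $W_g := S\cap gS$, so that both $v$ and $v^{-1}g$ lie in $S$; integrate $\|b(v)\|^p+\|b(v^{-1}g)\|^p$ against $\mathrm{d}m_G(v)$ over $W_g$, convert each integral to an $L^p(\mu)$-integral using only $f\ge c$ on $S$ (plus boundedness of $\Delta_G$ on $S$ for the substitution $u=v^{-1}g$), and check $\inf_{g\in S}m_G(W_g)>0$ via continuity of $\mathbf{1}_S*\mathbf{1}_S$ after normalizing $S$ to be a symmetric compact neighbourhood of $e$ equal to the closure of its interior. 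As written, though, your argument has a genuine gap at its central step.

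For comparison, the paper takes a different route that never produces an explicit constant: it realizes $(Z^1(G,\pi),\|\cdot\|_{L^p(\mu)})$ as the closed subspace of $L^p(G,\mu,E)$ cut out by the cocycle relation --- the real content being that a merely measurable cocycle with finite $L^p(\mu)$-norm is automatically continuous, proved by a Steinhaus-type argument with $\mathbf{1}_B*\mathbf{1}_B$ --- and then invokes the open mapping theorem against the complete norm $\|\cdot\|_S$. This is also where separability of $E$ is actually used (to cover $G$ by countably many translates $g_nB$ and conclude $m_G(B)>0$), not for Bochner integrability as you suggest: a continuous map from the second countable group $G$ has separable image regardless of $E$.
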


\begin{proof} It is classical to show that the space $L^p(G, \mu, E)$ of all measurable maps $b: G \to E$ satisfying $||b||_{L^p(\mu)}< \infty$ modulo the relation $||b||_{L^p(\mu)} = 0$ is a Banach space. The closed subspace of maps satisfying the cocycle relation for $\pi$ is hence also a Banach space, we now show that this space coincides with $Z^1(G, \pi)$ by showing that these elements are automatically continuous outside a null set. 

Let $b: G \to E$ be a measurable map such that $||b||_{L^p(\mu)}< \infty$ and satisfying the cocycle relation. For $\epsilon >0$, it is enough to show that the set $B = \{ g \in G, ||b(g)|| < \epsilon \}$ contains an open neighborhood of $e_G$ (as the cocycle relation propagates continuity over all $G$). Since $E$ is separable, choose a sequence $g_n \in G$ such that the set $\{ b(g_n), n \in \na \}$ is dense in $b(G)$.  The cocycle relation implies that $G = \bigcup_{n \in \na} g_n B$, and hence $m_G(B) > 0$. %This implies that $\mu(B) > 0$. 
We may also assume that $m_G(B)< \infty$. Under this condition, define the continuous function $\phi = \mathbf{1}_B * \mathbf{1}_B$. Since $\phi(e_G) = m_G(B)$, the open set $U = \phi^{-1} (0, + \infty)$ is an open neighborhood of $e_G$ and $U \subset B^2$.
Hence $b \in Z^1(G, \pi)$. 

For every $b \in Z^1(G, \pi)$ we have:
    \begin{equation*}
        ||b||_{L^p(\mu)} =  \Big( \int_G ||b(g)||_E^p \, \mathrm{d} \mu (g) \Big)^{\frac{1}{p}} \leq \Big( \int_G |g|_S^p \, \mathrm{d} \mu (g) \Big)^{\frac{1}{p}} ||b||_S.
    \end{equation*}
This proves that $(Z^1(G, \pi), ||\cdot||_{L^p(\mu)})$ is a Banach space and that the identity map on $Z^1(G, \pi)$ is a bijective continuous linear map from $(Z^1(G, \pi), ||\cdot||_S)$ to $(Z^1(G, \pi), ||\cdot||_{L^p(\mu)})$. Hence the Banach isomorphism theorem tells us that the inverse map is also continuous.
\end{proof}

\section{Growth of cocycles in $L^\infty$-norm} \label{Section: Cocycles on ultraproducts}

The goal of this section is to prove Theorems \ref{LCFLp1} and \ref{Intro: Fast cocycle growth for actions on metric spaces}. We start by introducing ultrafilters and ultraproducts of representations. We recall tools from \cite{UltraproductActions} to deal with continuity issues for representations on ultraproducts. We conclude the section by proving Theorem \ref{Intro: Fast cocycle growth for actions on metric spaces} and then Theorem \ref{LCFLp1}.

\subsection{Ultrafilters and ultraproducts of representations}

An ultrafilter on a set $I$ is a set $\U$ of subsets of $I$ that is closed under taking supersets, and such that for every subset $A$ of $I$, $\U$ contains either $A$ or $I\setminus A$ (but not both). As is standard, the set of ultrafilters on $I$ is in natural bijection with the set of characters of $l^\infty(I)$: an ultrafilter is something that chooses, for every bounded family $(a_i)_{i\in I}$ of complex numbers a point in the closure of $\{a_i: i\in I\}$ in a way compatible with pointwise multiplication and addition. 

If $\U$ is an ultrafilter on a set $I$, we denote by $\displaystyle (a_i)_{i\in I}\mapsto \lim_\U a_i$ the associated character on $l^\infty(I)$. It is characterized by the fact that $A\in \U$ if and only if $\lim_\U 1_{i\in A}=1$.

For our purposes we will only need ultrafilters on $\mathbb N$. We will say that $\U$ is a principal ultrafilter if it is the filter of all subsets that contain a fixed element. By Zorn's lemma it follows that there exist non-principal ultrafilters on $\mathbb N$. From now on, $\mathcal U$ will denote a non-principal ultrafilter on $\mathbb N$.

Given a family of pointed metric spaces $(E_i, v_i, d_i)_{i\in \mathbb N}$ where $v_i \in E_i$, we define the ultraproduct metric space $(\prod_\mathcal{U} E_i, v_\mathcal{U},  d_{\mathcal{U}})$ as
\begin{equation*}
    \prod_\mathcal{U} E_i := \left\{ (x_i)_{i \in \na} \in \prod_{i \in \na} E_i, \text{ the sequence } d_i (v_i, x_i)  \text{ is bounded} \right\} \Bigg/ \sim.
\end{equation*}
where $(x_i)_{i \in \na} \sim (y_i)_{i \in \na}$ when $ \lim_\mathcal{U} d_i (x_i, y_i) = 0$, and for $x = [(x_i)_{i \in \na}], y = [(y_i)_{i \in \na}] \in \prod_\mathcal{U} E_i$ we define $ d_{\mathcal{U}} (x, y ) := \lim_\mathcal{U} d_i (x_i, y_i)$ and $v_\mathcal{U} = [(v_i)_{i \in \na}]\in \prod_\mathcal{U} E_i$. The most important case we will consider is the case of ultraproducts of Banach spaces. In this case, the result space does not depend on the choice of the basepoint sequence $v_\mathcal{U}$, as translations are isometries. Hence we can always assume $v_\mathcal{U} = [0]$.

If $\G$ is a discrete group with isometric actions $\s_i$ on the metric spaces $E_i$, it admits natural isometric actions on $(\prod_\mathcal{U} E_i, v_\mathcal{U},  d_{\mathcal{U}})$ for any $v_\mathcal{U} \in \prod_\mathcal{U} E_i $. However, if we consider a locally compact group $G$ and a family of strongly continuous isometric representations $\pi_i\colon G\to \Op\left(E_i\right)$ on Banach spaces $E_i$, the natural representation $G\to \Op(\prod_\U E_i)$ will not be necessarily strongly continuous. In order to deal with this issue, we recall the following notions and propositions from \cite{UltraproductActions}:

\begin{defn}\label{defnUltraproductrep}
    Let $G$ be a locally compact group together, $\pi_i\colon G\to \Op(E_i)$ a family of strongly continuous isometric representations on Banach spaces $E_i$. Denote by $\pi\colon G\to \Op(\prod_\U E_i)$ the natural group morphism induced on $\prod_\U E_i$ (not necessarily strongly continuous). For any $f$ in $C_c(G)$ we will consider $\pi_i(f)x=\int_G f(g)\pi_i(g)x\  dm_G(g)$ and define $\pi(f)$ by the following $\pi(f).(x_i)_i=(\pi_i(f).x_i)_i$. Define then $E_\U$ as the closure of the space spanned by:
    \begin{equation*}
        \left\{\pi(f)x \ \big| \ x\in \prod_\U E_i, f\in C_c(G) \right\}.
    \end{equation*}
    We will denote by $\pi_\U$ the restriction of $\pi$ to $E_\U$. It can be seen that this is a strongly continuous isometric representation.

    Given a class of Banach spaces $\mathcal{X}$, we say that $\mathcal{X}$ is \emph{stable under smoothing} if for every sequence of strongly continuous isometric representations $\pi_i\colon G\to \Op(E_i)$ on Banach spaces $E_i \in \mathcal{X}$, we have that $E_\mathcal{U} \in \mathcal{X}$.
\end{defn}

\begin{prop} \cite[Proposition 4.12]{UltraproductActions}
\label{propMikael}
    Let $(\sigma_i)_{i \in \mathbb N}$ be a family of affine isometric actions of $G$ on Banach spaces $E_i$, with linear part $\pi_i$ and translation part $b_i\colon G\to E_i$, that is, $\sigma_i(g)\xi= \pi_i(g)\xi +b_i(g)$ for all $\xi$ in $E_i$. Assume that: \\
    $(i)$ The cocycles $b_i$ are pointwise bounded; that is, for every $g \in G$ we have:
    \begin{equation*}
        \sup_{i\in \mathbb N} ||b_i(g)|| < \infty.
    \end{equation*}
    $(ii)$ The cocycles are equicontinuous at the identity of $G$; that is, for all $\epsilon>0$, there exists a neighbourhood of the identity $U\subset G$ such that:
    \begin{equation*}
        \sup_{i\in \mathbb N} \sup_{g\in U}||b_i(g)||< \epsilon.
    \end{equation*}
    We then have that there is a continuous affine isometric action of $G$ on $E_\mathcal U$ with linear part $\pi_\mathcal U$ and translation part $b(g)=(b_i(g))_\mathcal U$. Moreover we have that:
    \begin{equation*}
        \sup_{|g|\leq n} ||b_\U(g)||=\lim_\U \sup_{|g|\leq n}||b_i(g)||.
    \end{equation*}
    \end{prop}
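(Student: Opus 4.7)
The plan is to construct the action in three stages and then verify the norm formula. First, I would define the translation part $b_\U\colon G\to \prod_\U E_i$ by $b_\U(g):=(b_i(g))_\U$, which makes sense by the pointwise boundedness assumption~(i) and satisfies the cocycle relation $b_\U(gh)=b_\U(g)+\pi_\U(g)b_\U(h)$ componentwise. Equicontinuity~(ii) at $e$ gives $\|b_\U(g)\|=\lim_\U\|b_i(g)\|\leq \sup_i\sup_{h\in U}\|b_i(h)\|$ for $g\in U$, so $b_\U$ is continuous at $e$, and continuity everywhere follows from the identity $b_\U(g)-b_\U(g_0)=\pi_\U(g_0)b_\U(g_0^{-1}g)$ together with the fact that $\pi_\U(g_0)$ is an isometry.

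The main obstacle is to show that $b_\U(g_0)$ actually lies in the subspace $E_\U$, not merely in the larger ultraproduct $\prod_\U E_i$. Since $E_\U$ contains every $\pi(f)y=(\pi_i(f)y_i)_\U$ with $f\in C_c(G)$ and $y\in\prod_\U E_i$, and is closed, it is enough to approximate $b_\U(g_0)$ by $\pi(f)b_\U(g_0)$ with $f$ an approximate identity near $e$. Using $\int_G f\,dm_G=1$ and the cocycle relation one rewrites
\begin{equation*}
\pi_i(f)b_i(g_0)-b_i(g_0)=\int_G f(h)\bigl(\pi_i(h)b_i(g_0)-b_i(g_0)\bigr)\,dm_G(h).
\end{equation*}
The key identity is $\pi_i(h)b_i(g_0)=b_i(hg_0)-b_i(h)=\pi_i(g_0)b_i(g_0^{-1}hg_0)+b_i(g_0)-b_i(h)$, obtained by expanding $b_i(hg_0)$ in two ways via the cocycle relation, once as $b_i(h)+\pi_i(h)b_i(g_0)$ and once as $b_i(g_0\cdot g_0^{-1}hg_0)=b_i(g_0)+\pi_i(g_0)b_i(g_0^{-1}hg_0)$. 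It yields $\|\pi_i(h)b_i(g_0)-b_i(g_0)\|\leq \|b_i(g_0^{-1}hg_0)\|+\|b_i(h)\|$. Given $\epsilon>0$, choose first a neighborhood $V$ of $e$ with $\sup_i\sup_{h'\in V}\|b_i(h')\|<\epsilon$ from~(ii) and then a neighborhood $U$ of $e$ with $U\subset V$ and $g_0^{-1}Ug_0\subset V$ (possible by continuity of conjugation); for $f$ supported in $U$ the right-hand side is $<2\epsilon$ uniformly in $i$, so $\|\pi(f)b_\U(g_0)-b_\U(g_0)\|\leq 2\epsilon$, and $b_\U(g_0)\in E_\U$ by closedness. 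The formula $\sigma_\U(g)\xi:=\pi_\U(g)\xi+b_\U(g)$ then defines a continuous affine isometric action on $E_\U$, since $\pi_\U$ is strongly continuous on $E_\U$ by construction (Definition~\ref{defnUltraproductrep}).

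For the norm identity, the inequality $\sup_{|g|\leq n}\|b_\U(g)\|\leq \lim_\U\sup_{|g|\leq n}\|b_i(g)\|$ is immediate from $\|b_\U(g)\|=\lim_\U\|b_i(g)\|$ for each fixed $g$. For the reverse inequality, I would use that $B_n:=\{g:|g|_S\leq n\}$ is compact (since $S$ is compact) and that the family $(b_i)$ is equicontinuous at every point of $G$ uniformly in $i$: the same manipulation $\|b_i(hg_0)-b_i(g_0)\|=\|b_i(g_0^{-1}hg_0)\|$ shows this by shrinking neighborhoods of $g_0$. Given $\epsilon>0$, cover $B_n$ by finitely many translates $g_1U,\dots,g_kU$ on which each $b_i$ oscillates by at most $\epsilon$; then $\sup_{|g|\leq n}\|b_i(g)\|\leq \max_j\|b_i(g_j)\|+\epsilon$, and taking $\lim_\U$ gives $\lim_\U\sup_{|g|\leq n}\|b_i(g)\|\leq \max_j\|b_\U(g_j)\|+\epsilon\leq \sup_{|g|\leq n}\|b_\U(g)\|+\epsilon$; letting $\epsilon\to 0$ concludes.
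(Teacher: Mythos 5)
Your argument is correct. Note that the paper does not prove this proposition at all --- it is imported verbatim from \cite[Proposition 4.12]{UltraproductActions} --- so there is no in-paper proof to compare against; but your three steps (cocycle in the full ultraproduct via pointwise boundedness, the approximate-identity estimate $\|\pi(f)b_\U(g_0)-b_\U(g_0)\|\leq 2\epsilon$ to land in the closed subspace $E_\U$, and compactness of $B_n$ plus uniform-in-$i$ equicontinuity for the norm identity) are exactly the standard route, and the convolution trick is the same one the paper itself deploys with the cutoff $\chi$ in the proof of Theorem \ref{LCLpproof1}. The only cosmetic slip is that you quote the left-translate identity $\|b_i(hg_0)-b_i(g_0)\|=\|b_i(g_0^{-1}hg_0)\|$ but then cover $B_n$ by right translates $g_jU$; the matching identity $b_i(g_ju)-b_i(g_j)=\pi_i(g_j)b_i(u)$ gives the oscillation bound $\sup_{u\in U}\|b_i(u)\|<\epsilon$ uniformly in $i$ and $j$, so the covering argument goes through unchanged.
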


\begin{rem}
    Using techniques from \cite{Jeroen}, one can bypass Proposition \ref{propMikael} and obtain results similar to Theorem \ref{Intro: Fast cocycle growth for actions on metric spaces} for actions of locally compact groups on more general classes of metric spaces (not Banach), but with restrictions of other nature. Namely, the space playing the role of $E_\mathcal{U}$ in \cite[Proposition 1.6]{Jeroen} is only a closed convex subset in an ultraproduct, hence to obtain similar results we have to require our classes of metric spaces to be stable under taking closed convex subsets.
\end{rem}

We now discuss the relevant properties of ultraproducts of $L^p$-spaces.
Given a family of $L^p$-spaces $(E_i)_{i\in \mathbb N}$, it is known that the ultraproduct $\prod_\U E_i$ is an $L^p$-space \cite[Theorem 3.3 (ii)]{heinrich}. Moreover, we have the following.

\begin{prop} \cite[Proposition 4.11]{UltraproductActions} \label{E_U is an Lp space}
Let $E_i$ be an $L^p$-space for every $i$. Then $E_\U$ is also an $L^p$-space. In other words, the class of all $L^p$-spaces is stable under smoothing.
\end{prop}

A technical issue we will encounter is that even if we start from a separable $L^p$-space $E$, the $L^p$-space $E_\U$ may not be separable anymore. Nevertheless, it is possible to find a slightly smaller separable $L^p$-space on which $G$ still acts. 

\begin{prop}\label{separable Lp space inside E_U}
    Let $G$ be a locally compact second countable group acting by continuous affine isometries on some $L^p$-space $E$, there exists some separable $L^p$-space $F$, contained as a closed subspace inside $E$ that is $G$-invariant.
\end{prop}

The proof of this result is given in Appendix \ref{Section: Appendix}.

\subsection{Ultraproducts of isometric actions}

Let $G$ be a locally compact group together with $S$ a relatively compact symmetric neighborhood of the identity generating $G$. We will first prove the analogue of \cite[ Lemma 2]{lafforgue-typeneg} for $p$-uniformly convex metric spaces.

\begin{lem}
Given $G$, a $(p,c_E)$-uniformly convex metric space $E$ together with an isometric continuous action of $G$ on $E$ that doesn't have fixed points, we have that for every $N$ and $\epsilon > 0$ there exists $v \in E$ such that:
\begin{equation*}
    \rho(S^n\cdot v) \geq \left(c_E(1-\epsilon)(n-1)+1 \right)^{1/p} \rho(S\cdot v)
\end{equation*}
for all $n$ in $\{1,2,\cdots,N\}$.
\label{LpLafforgue}
\end{lem}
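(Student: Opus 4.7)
The plan is to argue by contradiction. Assuming the conclusion fails for some $N \geq 2$ and $\eps > 0$, and noting that the $n=1$ inequality is tautological, every $v \in E$ admits a smallest failure index $n_v \in \{2,\ldots,N\}$ satisfying
\begin{equation*}
\rho(S^{n_v}\cdot v)^p < \bigl(c_E(1-\eps)(n_v-1)+1\bigr)\rho(S\cdot v)^p, \quad \rho(S^{n_v-1}\cdot v)^p \geq \bigl(c_E(1-\eps)(n_v-2)+1\bigr)\rho(S\cdot v)^p.
\end{equation*}
Subtracting the second from the first yields the key gap estimate $\rho(S^{n_v}\cdot v)^p - \rho(S^{n_v-1}\cdot v)^p < c_E(1-\eps)\rho(S\cdot v)^p$.

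The heart of the argument is an iteration. Pick any $v_0 \in E$; it cannot be a fixed point, so $\rho(S\cdot v_0) > 0$. Define $v_{k+1} := c(S^{n_{v_k}-1}\cdot v_k)$. For each $s \in S$, isometry of the action gives $s \cdot v_{k+1} = c(sS^{n_{v_k}-1}\cdot v_k)$, and because $S$ is symmetric and contains the identity, the inclusion $sS^{n_{v_k}-1}\cdot v_k \subseteq S^{n_{v_k}}\cdot v_k$ holds. Applying Lemma \ref{Lplem} to this inclusion and plugging in the gap estimate gives
\begin{equation*}
c_E\, d\bigl(s\cdot v_{k+1},\, c(S^{n_{v_k}}\cdot v_k)\bigr)^p \leq \rho(S^{n_{v_k}}\cdot v_k)^p - \rho(S^{n_{v_k}-1}\cdot v_k)^p < c_E(1-\eps)\rho(S\cdot v_k)^p.
\end{equation*}
The reference point $c(S^{n_{v_k}}\cdot v_k)$ is independent of $s$, so taking the supremum over $s \in S$ produces the contraction $\rho(S\cdot v_{k+1}) \leq (1-\eps)^{1/p}\rho(S\cdot v_k)$, and hence by induction the geometric decay $\rho(S\cdot v_k) \leq (1-\eps)^{k/p}\rho(S\cdot v_0)$.

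To finish, note that $v_k \in S^{n_{v_k}-1}\cdot v_k$ (since $e \in S$), so $d(v_k, v_{k+1}) \leq \rho(S^{n_{v_k}-1}\cdot v_k) \leq \rho(S^{n_{v_k}}\cdot v_k) \leq (c_E(1-\eps)(N-1)+1)^{1/p}\rho(S\cdot v_k)$. Combined with the geometric decay, this makes $\sum_k d(v_k, v_{k+1})$ summable, so $(v_k)$ is Cauchy in the complete metric space $E$ and converges to some $v^*$. Continuity of the action and $\rho(S\cdot v_k) \to 0$ force $d(v^*, s\cdot v^*) = 0$ for every $s \in S$, so $v^*$ is fixed by $S$ and hence by $G$, contradicting the hypothesis. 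The delicate point of the argument is the choice of iterate $v_{k+1} = c(S^{n_{v_k}-1}\cdot v_k)$: the exponent $n_{v_k}-1$ is the unique one for which the orbit inclusion keeps us inside the range $\{1,\ldots,N\}$ where the contradiction hypothesis supplies a non-trivial gap, which in turn produces a contraction factor strictly less than one.
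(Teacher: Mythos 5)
Your proof is correct and follows essentially the same route as the paper's: assume failure, extract an index where the telescoped gap $\rho(S^{n}\cdot v)^p-\rho(S^{n-1}\cdot v)^p$ is small, apply Lemma \ref{Lplem} to the inclusion $sS^{n-1}\cdot v\subseteq S^{n}\cdot v$ to show the circumcenter has strictly smaller displacement, and iterate to produce a Cauchy sequence converging to a fixed point. The only cosmetic difference is that you locate the small gap via the minimal failure index, whereas the paper uses an averaging/pigeonhole argument over the telescoping sum; both yield the same estimate.
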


\begin{proof}
For any $v \in E$ let's denote $\rho(S^n\cdot v)$,\ $c(S^n\cdot v)$ by $\rho_n(v),c_n(v)$ respectively. 
The idea of the proof will be to suppose the conclusion of the lemma to be false and arrive at the following claim:
\begin{claim}
For any $v \in E$, we can find a $v' \in E$ such that $\rho_1(v')\leq (1-\eps)^{1/p}\rho_1(v)$ and $d(v',v)\leq C\rho_1(v)$ for some $C$ that is independent of $v$.
\end{claim}

This claim will allow us to arrive to a contradiction, since if we start with $v_0 \in E$ we can apply the claim recursively and obtain a sequence $(v_n)_{n\geq 0}$ such that:
\begin{align*}
    \rho_1(v_n) &\leq (1-\epsilon)^{\frac np} \rho_1(v_0)\\
    d(v_n,v_{n+1})&\leq C \rho_1(v_n).
\end{align*}
For all $n\geq 0$, which then implies that $(v_n)_{n\geq 0}$ is a Cauchy sequence. Let's denote by $\overline v$ its limit, it is then clear that $\overline v$ is a fixed point of the action giving us a contradiction.
 \end{proof}
 
\begin{proof}[Proof of Claim]
Suppose that the conclusion of Lemma \ref{LpLafforgue} is false, that is, there exists some $N$ and some $\epsilon > 0$ such that for every $v\in E$, there exists some $m$ in  $\{2,\cdots,N\}$ such that:
\begin{equation*}
    \rho_m(v)^p < (c_E(1-\epsilon)(m-1)+1) \rho_1(v)^p
\end{equation*}
This implies that there exists some $n$ in $\{1,2,\cdots,m-1\}$ such that:
\begin{equation*}
    \rho_{n+1}(v)^p - \rho_n(v)^p \leq c_E(1-\epsilon) \rho_1(v)^p.
\end{equation*}
Since for all $g$ in $S$ we know that $gS^n\cdot v\subset S^{n+1}\cdot v$ by Lemma \ref{Lplem} we have that:
\begin{equation*}
    c_E d(g\cdot c_n(v),c_{n+1}(v))^p \leq \rho_{n+1}(v)^p-\rho_n(v)^p \leq c_E(1-\epsilon) \rho_1(v)^p.
\end{equation*}
This implies that $S\cdot c_n(v)\subset B(c_{n+1}(v),(1-\epsilon)^{\frac 1p}\rho_1(v))$ which in turn implies that $\rho_1(c_n(v))\leq (1-\eps)^{1/p}\rho_1(v)$. It can also be seen that $d(c_n(v),v)\leq \rho_n(v)\leq \rho_m(v)\leq (c_EN+1)^{1/p}\rho_1(v)$ which finishes the proof of our Claim.
\end{proof}
In a similar manner as \cite{lafforgue-typeneg}, let's define $\displaystyle a_n(v)=\sup_{|g|\leq n}d(g.v,v)$ and note that:
\begin{equation*}
    2\rho_n(v) \geq a_{2n}(v) \geq a_n(v)\geq \rho_n(v)
\end{equation*}
The first inequality coming from the fact that $B_n.v\subset B(c_n(v),\rho_n(v))$ and that the diameter of $B_n.v$ is $a_{2n}(v)$; while the last inequality comes from the fact that $S^n\cdot v\subset B(v,a_n(v))$.

This implies then the following:
\begin{cor}\label{corLpLafforgue}
    Under the same hypothesis as Lemma \ref{LpLafforgue}, for all $N\in \mathbb N$ and $\epsilon>0$ there exists $v\in E$ such that:
    \begin{equation*}
        a_n(v) \geq  \left(c_E(1-\epsilon)(n-1)+1 \right)^{1/p} \dfrac{a_2(v)}2
    \end{equation*}
    for all $n$ in $\{1,2,\ldots, N\}$.
\end{cor}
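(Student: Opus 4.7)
My plan is to derive the corollary directly from Lemma \ref{LpLafforgue} by combining it with the two-sided comparison
\begin{equation*}
    2\rho_n(v) \geq a_{2n}(v) \geq a_n(v) \geq \rho_n(v)
\end{equation*}
displayed immediately before the statement. The corollary is essentially a repackaging of the lemma in terms of the displacement function $a_n(v) = \sup_{|g|\leq n} d(g\cdot v, v)$, which will be the convenient quantity for applications to cocycle growth.

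Concretely, fix $N \in \na$ and $\eps > 0$ and invoke Lemma \ref{LpLafforgue} to produce $v \in E$ satisfying
\begin{equation*}
    \rho_n(v) \geq \bigl(c_E(1-\eps)(n-1)+1\bigr)^{1/p} \rho_1(v)
\end{equation*}
for every $n \in \{1, \ldots, N\}$. On the left-hand side I would use $a_n(v) \geq \rho_n(v)$, and on the right-hand side the special case $n=1$ of $2\rho_n(v) \geq a_{2n}(v)$, namely $\rho_1(v) \geq a_2(v)/2$. Plugging these two substitutions into the lemma's inequality yields
\begin{equation*}
    a_n(v) \geq \rho_n(v) \geq \bigl(c_E(1-\eps)(n-1)+1\bigr)^{1/p}\rho_1(v) \geq \bigl(c_E(1-\eps)(n-1)+1\bigr)^{1/p}\frac{a_2(v)}{2},
\end{equation*}
which is exactly the inequality claimed in the corollary.

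There is no real obstacle here: all the work has already been done in proving Lemma \ref{LpLafforgue} and in establishing the comparison between $a_n$ and $\rho_n$ (the first from $B_n\cdot v \subset B(c_n(v),\rho_n(v))$ having diameter $a_{2n}(v)$, the second from $S^n\cdot v \subset B(v,a_n(v))$). One only has to verify that the same $v$ works simultaneously for all $n \in \{1,\ldots,N\}$, which is automatic since the $v$ produced by Lemma \ref{LpLafforgue} is chosen uniformly in $n$ in that range.
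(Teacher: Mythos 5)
Your proposal is correct and is precisely the argument the paper intends: the corollary is stated as an immediate consequence of Lemma \ref{LpLafforgue} together with the chain $2\rho_n(v) \geq a_{2n}(v) \geq a_n(v) \geq \rho_n(v)$, using $a_n(v) \geq \rho_n(v)$ on the left and $\rho_1(v) \geq a_2(v)/2$ on the right, exactly as you do. The observation that the same $v$ works for all $n \in \{1,\ldots,N\}$ is indeed automatic from the statement of the lemma.
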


We will now prove Theorem \ref{Intro: Fast cocycle growth for actions on metric spaces}, first for actions of discrete groups on $p$-uniformly convex metric spaces, and then for strongly continuous actions of locally compact groups on $p$-uniformly convex Banach spaces.

\begin{thm}\label{discreteGeneralVersionproof}
    Let $\mathcal{X}$ be a class of $(p, c)$-uniformly convex metric spaces that is stable under scaling and ultraproducts.
    Let $\Ga$ be a finitely generated group with word length $| \cdot |$. Suppose that $\G$ does not have property $F\mathcal X$. Then there exists a metric space $(F,d_F)$ in $\mathcal X$, a continuous isometric action $\sigma\colon G\to \mathrm{Isom}(F)$ and a basepoint $v\in F$ such that:
\begin{equation*}
    \sup_{|g|\leq n} d_F(\sigma(g)v,v) \geq \frac 12 \left(c(n-1)+1\right)^{\frac 1p} \sup_{|g|\leq 1}d_F(\sigma(g)v,v)
\end{equation*}
for all $n\in \mathbb N$ and such that $\sup_{|g|\leq 1} d_F(\sigma(g)v,v)>0$.    
\end{thm}
\begin{proof}
    Start with some metric space $(E,d_E)\in \mathcal X$ and a continuous isometric action $\sigma\colon G\to \mathrm{Isom}(E)$ of $\Ga$ on $E$. %with linear part $\pi\colon \Ga\to \mathcal{O}(E)$ and translation part $b\colon \Ga\to E$. 
Since $\Ga$ does not have property $F\mathcal X$, we may assume that the action $\s$ does not have fixed points. By Lemma \ref{LpLafforgue} and Corollary \ref{corLpLafforgue}, there exists a sequence $(v_i)_{i\in \mathbb N}$ in $E$ such that:
\begin{equation*}
 \sup_{|g|\leq n}d_E(\s(g) v_i,v_i)\geq \frac 12 \left(c\left(1-\frac 1i\right)(n-1)+1\right)^{\frac 1p}   \sup_{|g|\leq 1}{d_E(\s(g) v_i,v_i)}.
\end{equation*}
for all $n$ in $\{1,\ldots,i\}$. We can then rescale the metric $d_E$ to obtain metric spaces $(E,d_i)\in \mathcal X$ such that for all $i$:
\begin{equation*}
    \sup_{|g|_S\leq 1} d_{i}(\s(g)v_i,v_i) = 1  
\end{equation*}
and such that:
\begin{equation*}
        \sup_{|g|\leq n} d_i(\s(g)v_i,v_i) \geq \frac 12 \left(c\left(1-\frac 1i\right)(n-1)+1\right)^{\frac 1p} \sup_{|g|\leq 1}d_i(\s(g)v_i,v_i)
\end{equation*}
for all $n$ in $\{1,\ldots,i\}$.
Consider the ultraproduct pointed metric space $(F,d_F,v)=\prod_\U (E,d_i,v_i)\in \mathcal X$, together with its natural isometric $\Ga$-action $\s_\mathcal{U}$. We have: %as well as the base point $v=[(v_i)]$. It is clear then that:
\begin{equation*}
    \lim_{\mathcal U}\sup_{|g|\leq n} d_i(\s(g)v_i,v_i) \geq \lim_\mathcal U\frac 12 \left(c\left(1-\frac 1i\right)(n-1)+1\right)^{\frac 1p}.
\end{equation*}
Since $\sup_{|g|\leq n} \lim_{\mathcal U} d_i(\s(g)v_i,v_i) = \lim_{\mathcal U}\sup_{|g|\leq n} d_i(\s(g)v_i,v_i)$, we obtain:
\begin{equation*}
     \sup_{|g|\leq n} d(\s_\mathcal{U}(g)v,v)\geq \frac 12 \left(c(n-1)+1\right)^{\frac 1p} \sup_{|g|\leq 1} d(\s_\mathcal{U}(g)v,v).
\end{equation*}

\end{proof}

In the locally compact setting we have a similar statement. The proof of the following Theorem is essentially the same as that of Theorem \ref{discreteGeneralVersionproof}. It is technically more involved as we need to convolute in order to guarantee the continuity of the ultraproduct actions.

\begin{thm}
    \label{LCLpproof1}
       Consider $\mathcal X$ a class of $(p,c)$-uniformly convex Banach spaces that is closed under scaling, ultraproducts and smoothing. Let $G$ be a locally compact, compactly generated group with word length $| \cdot |$. Suppose that $G$ does not have $F\mathcal X$. Then there exists $F$ in $\mathcal X$, together with a strongly continuous isometric representation $\pi\colon G\to \Op(F)$ and a nonzero cocycle $b\in Z^1(G,\pi)$ such that:
       \begin{equation*}
     \sup_{|g|_S\leq n} ||b(g)|| \geq \left(\frac 12(c(n-1)+1)^{\frac 1p} - 2\right) \sup_{|g|_S\leq 1} ||b(g)||
 \end{equation*}
 for all $n\geq 1$.

\end{thm}

\begin{proof}
   Consider a $(p,c)$-uniformly convex Banach space $E$ with a continuous isometric affine action of $G$ on $E$%with linear part $\rho \colon G\to \mathcal O(E)$ and translation part $b\colon G\to E$
    . Assume that this affine action does not have fixed points. By Lemma \ref{LpLafforgue} and Corollary \ref{corLpLafforgue}, there exists a sequence $(v_i)_{i\in \mathbb N}$ in $E$ such that:
\begin{equation*}
 \sup_{|g|\leq n}||g.v_i-v_i||\geq \frac 12 \left(c\left(1-\frac 1i\right)(n-1)+1\right)^{\frac 1p}   \sup_{|g|\leq 2}{||g.v_i-v_i||},
\end{equation*}
for all $n\leq i$. We can then renormalize the cocycles $g\mapsto g.v_i - v_i$, obtaining new actions $\sigma_i\colon G\act E$ and vectors $\overline v_i \in E$, with 
cocycles $\overline b_i(g) := \sigma_i(g)(\overline v_i)-\overline v_i$ such that:
\begin{equation*}
    \sup_{|g|\leq 2} ||\overline b_i(g)|| = 1. 
\end{equation*}
The cocycles $\overline{b}_i$ satisfy for all $n\leq i$:
\begin{equation*}
    \sup_{|g|\leq n} ||\overline b_i(g)|| \geq \frac 12 \left(c\left(1-\frac 1i\right)(n-1)+1\right)^{\frac 1p}  \sup_{|g|\leq 2} ||\overline b_i(g)|| .
\end{equation*}
Consider then a function $\chi \colon G\to \mathbb [0,\infty)$ of compact support included in $V$, a symmetric neighbourhood of the identity such that $V.V\subset S$ and $V.S.V\subset S^2$ and $\int_{G}\chi(g) dm_G(g)=1$.
Let's define then the following:
\begin{equation*}
    \tilde v_i := \int_G \chi(h) \sigma_i(h)(\overline v_i)dm_G(h).
\end{equation*}
Consider the $1$-cocycles given by $b_i(g): =\sigma_i(g)(\tilde v_i)-\tilde v_i$. Let's first prove that:
\begin{equation*}
    \sup_{|g|\leq 1} ||b_i(g)||\leq 1.
\end{equation*}
This being true since we have that for all $g\in S$:
\begin{align*}
    ||b_i(g)|| &=||\sigma_i(g)(\tilde v_i)-\tilde v_i|| \\
    &=\left\lVert\int_V \int_V \chi(h_1)\chi(h_2) [\sigma_i(gh_1)\overline v_i-\sigma_i(h_2)\overline v_i]dm_G(h_1)dm_G(h_2)\right\rVert\\
    &\leq \int_V\int_V \chi(h_1)\chi(h_2) \left\lVert \sigma_i(h_2^{-1}gh_1)\overline v_i-\overline v_i \right\rVert dm_G(h_1)dm_G(h_2)\\
    &\leq 1,
\end{align*}
where the last inequality was obtained from the fact that $V.S.V\subset S^2$.
Now we can prove condition $(i)$ in Proposition \ref{propMikael}, that is for any $g\in G$, we have that:
\begin{equation*}
    ||b_i(g)||\leq |g|_S\sup_{|g|\leq 1}||b_i(g)||\leq |g|_S.
\end{equation*}
Now, in order to see that these cocycles satisfy condition $(ii)$ of Proposition \ref{propMikael}, let's consider $\epsilon >0$ and see that:
\begin{align*}
    \lVert b_i(g)\rVert  &= \left\lVert \int_G \chi(h)\sigma_i(gh).\overline v_i \ dm_G(h)-\int_G \chi(h)\sigma_i(h).\overline v_i \  dm_G(h)\right\rVert \\
    &= \left\lVert  \int_G \chi(g^{-1}h)\sigma_i(h).\overline v_i \ dm_G(h)-  \int_G \chi(h)\sigma_i(h).\overline v_i \ dm_G(h)\right\rVert\\
    &= \left\lVert \int_G    \chi(g^{-1}h)[\sigma_i(h).\overline v_i- \sigma_i(g).\overline v_i] \   dm_G(h) \right. \\
    & \left. + \int_G \chi(h)[\sigma_i(g).\overline v_i-\sigma_i(h).\overline v_i ] \ dm_G(h)\right\rVert \\
    &=\left\lVert\int_G [\chi(g^{-1}h)-\chi(h)][\sigma_i(h).\overline v_i-\sigma_i(g).\overline v_i]  \ dm_G(h)\right\rVert\\
    &\leq   \int_G |\chi(g^{-1}h)-\chi(h)| |h^{-1}g|_S \ dm_G(h).
\end{align*}
Let's choose $U$ a neighbourhood of the identity in $G$ such that $U\subset S$ and such that, for all $g$ in $U$, we have:
\begin{equation*}
    \int_G |\chi(g^{-1}h)-\chi(h)|<\epsilon.
\end{equation*}
Let's fix then $g$ in $U$. Since any $h$ not in $gV\cup V$ we have $|\chi(g^{-1}h)-\chi(h)|=0$, and for any $h\in gV\cup V$ we have that $|h^{-1}g|_S\leq 2$; we have that:
\begin{equation*}
    ||b_i(g)||\leq 2|\chi(g^{-1}h)-\chi(h)|<2 \epsilon,
\end{equation*}
which proves condition $(ii)$ of Proposition \ref{propMikael}. Hence Proposition \ref{propMikael} implies that the formula $b(g) := (b_i(g))_\U$ defines a 1-cocycle of the natural strongly continuous isometric representation on $E_\mathcal{U}$. 

It is clear by construction that:
\begin{equation*}
    ||\tilde v_i-\overline v_i|| = \left\lVert \int_V \chi(h)[\sigma_i(h).\overline v_i- \overline v_i]\ dm_G(h)\right\rVert \leq \int_V \chi(h) \ dm_G(h) = 1,  
\end{equation*}
which then implies that:
\begin{equation*}
    ||b_i(g)-\overline b_i(g)|| \leq ||\sigma_i(g).\tilde v_i-\sigma_i(g).\overline v_i||+ ||\tilde v_i-\overline v_i||\leq 2.
\end{equation*}
We then have that, for all $n\leq i$:
\begin{equation*}
    \sup_{|g|\leq n} ||b_i(g)||\geq  \sup_{|g|\leq n} ||\overline b_i(g)||-2\geq \frac 12 \left(c\left(1-\frac 1i\right)(n-1)+1\right)^{\frac 1p} -2.
\end{equation*}
From this inequality we obtain:
\begin{equation*}
    \sup_{|g|\leq n} ||b(g)||\geq \lim_\U \left( \frac 12 \left(c\left(1-\frac 1i\right)(n-1)+1\right)^{\frac 1p} -2\right)=\frac 12 \left(c(n-1)+1\right)^{\frac 1p}-2.
\end{equation*}

We have thus shown that if we consider the space $F=E_\mathcal U$, the strongly continuous representation $\pi\colon G\to \mathcal O(F)$  and the cocycle $b : G \to E_\mathcal{U}$, then $b$ satisfies the inequality in the statement of our Theorem. 
\end{proof}
%\com{We can argue as in the end of the proof of Theorem \ref{discreteFLpproof} to conclude that $b$ takes values in some separable $L^p$-space $F$.}

\begin{proof}[Proof of Theorem \ref{LCFLp1}]
    To obtain the desired theorem for separable $L^p$-spaces, first recall that the class $\mathcal{X}$ of all $L^p$-spaces is stable under scaling, ultraproducts \cite[Theorem 3.3 (ii)]{heinrich} and smoothing (Proposition \ref{E_U is an Lp space}). Hence, given a compactly generated locally compact group $G$ without property $FL^p$ for some $p >2$, so in particular without $F \mathcal{X}$, we can apply Theorem \ref{LCLpproof1} to obtain an (a priori non-separable) $L^p$-space $E$, a strongly continuous isometric representation $\pi$ of $G$ on $E$ and a 1-cocycle $b$ with the required properties.

    By Proposition \ref{separable Lp space inside E_U}, there exists a separable $L^p$-space $F$ contained in $E$, that is $\pi(G)$-invariant and such that $b (G) \subseteq F$. Hence the cocycle $b$, associated to the strongly continuous representation $\pi : G \to \mathcal{O}(F)$ satisfies the required conditions.
\end{proof}

\section{Growth of harmonic cocycles in $L^p$-norm}\label{Section: Growth of harmonic cocycles}

This section is devoted to the proof of Theorems \ref{Intro: Growth of harmonic cocycles} and \ref{Intro: Not FLp and T implies fast growth on all cocycles}. It first introduces general properties of harmonic cocycles before proving the main results.

We begin by describing the common setting for all of this section. Let $G$ be a compactly generated locally compact group with compact generating set $S$ and associated word length $| \cdot |_S$ and let and $\mu \in \Prob(G)$ be a cohomologically adapted probability measure on $G$.

\subsection{Harmonicity and spectral gap}

We first introduce $\mu$-harmonic cocycles. Recall that a $1$-cocycle $b : G \to E$ of a continuous isometric representation on a Banach space $E$ is always Bochner-measurable (they are continuous, and defined on a separable space) and the sublinearity of a $1$-cocycle implies that $b$ is Bochner-integrable for any measure on $G$ with finite first moment.

\begin{defn}
   Let $E$ be a Banach space, $\pi: G \to \mathcal{O}(E)$ be a continuous isometric representation of $G$ on $E$ and $b \in Z^1(G, \pi)$. We say that $b$ is \textit{$\mu$-harmonic} if the following Bochner integral vanishes:
    \begin{equation*}
       b(\mu) := \int_G b(g) \mathrm{d} \mu (g) = 0.
    \end{equation*}
\end{defn}

$\mu$-harmonic cocycles are of particular importance for the study of unitary representations. The main drawback to use this definition in Banach settings is that it is hard to show their existence or to reduce problems to them. This is due to the fact that the duality mapping $*: E \to E^*$ is not linear when $E$ is not a Hilbert space. One situation in which we know that they are abundant is when our representation has spectral gap.

\begin{defn}
    Let $E$ be a Banach space and $\pi: G \to \mathcal{O}(E)$ be a continuous isometric representation of $G$ on $E$. We say that $\pi$ has \textit{spectral gap} if there exists $\e > 0$ such that for every $v \in E$ we have:
    \begin{equation*}
        \sup_{g \in S} ||v - \pi(g) v||_E \geq \e ||v||_E.
    \end{equation*}
\end{defn}

The following result is obtained by combining \cite[1.1]{drutu-nowak} and  \cite[6.1]{marrakchi-dlSalle}. It also appeared in the Hilbert setting in \cite[4]{bekka-harmonic2017} and in $L^p$-settings in \cite[3.12]{gournay-liouville}. We write a proof for completion.

\begin{prop}\label{spectral gap implies harmonic representative}
    Let $E$ be a uniformly convex Banach space and $\pi: G \to \mathcal{O}(E)$ be a continuous isometric representation of $G$ on $E$. If $\pi$ has \textit{spectral gap} then every cohomology class in $H_{\mathrm{ct}}^1(G, \pi)$ contains a unique $\mu$-harmonic representative. More precisely, every $b \in Z^1(G, \pi)$ can be written as $b = b_\mathrm{harm} + b_0$ where:
    \begin{equation*}
        b_0 (g) = v_0 - \pi(g) v_0, \quad v_0 = \sum_{i = 0}^{\infty} \pi(A_\mu)^{i} b(\mu), \quad \pi(A_\mu) = \int_G \pi(g) \, \mathrm{d} \mu (g)
    \end{equation*}
    and $b_\mathrm{harm} \in Z^1(G, \pi)$ is $\mu$-harmonic.
\end{prop}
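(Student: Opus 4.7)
The plan is to build the decomposition via a Neumann series argument for the averaging operator
\begin{equation*}
\pi(A_\mu)\colon E \to E, \qquad v \mapsto \int_G \pi(g) v\, \mathrm{d}\mu(g),
\end{equation*}
which is a well-defined bounded operator of norm at most $1$ (the map $g \mapsto \pi(g) v$ is continuous and uniformly bounded by $||v||_E$, and $\mu$ is a probability measure). The crucial technical input, which is the combination of \cite[1.1]{drutu-nowak} and \cite[6.1]{marrakchi-dlSalle} advertised in the statement, is that uniform convexity of $E$ together with spectral gap of $\pi$ force $||\pi(A_\mu)^n|| \to 0$ in operator norm, with geometric rate. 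The heuristic is that if $||\pi(A_\mu) v||$ were close to $||v||$, then uniform convexity would force the family $\{\pi(g)v : g \in \mathrm{supp}(\mu)\}$ to cluster near $v$, and since $\mathrm{supp}(\mu) \supseteq S$ this would contradict the almost-invariance bound produced by the spectral gap. Given this decay, the Neumann series $\sum_{i \geq 0} \pi(A_\mu)^i$ converges in operator norm to $(I - \pi(A_\mu))^{-1}$.

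Note that $b(\mu) := \int_G b(g)\, \mathrm{d}\mu(g) \in E$ is well-defined because $||b(g)||_E \leq |g|_S\, ||b||_S$ and $\mu$ has finite first moment. Set
\begin{equation*}
v_0 := \sum_{i=0}^{\infty} \pi(A_\mu)^i b(\mu) = (I - \pi(A_\mu))^{-1} b(\mu), \qquad b_0(g) := v_0 - \pi(g) v_0, \qquad b_{\mathrm{harm}} := b - b_0.
\end{equation*}
By construction $b_0$ is a coboundary, so $b_{\mathrm{harm}}$ represents the same cohomology class as $b$. Integrating the definition of $b_0$ against $\mu$ and pulling the scalars out of the Bochner integral gives $b_0(\mu) = v_0 - \pi(A_\mu) v_0 = (I - \pi(A_\mu)) v_0 = b(\mu)$, hence
\begin{equation*}
b_{\mathrm{harm}}(\mu) = b(\mu) - b_0(\mu) = 0,
\end{equation*}
establishing harmonicity.

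For uniqueness, two $\mu$-harmonic representatives of the same class differ by a coboundary $g \mapsto v - \pi(g) v$ whose $\mu$-average is $(I - \pi(A_\mu)) v$; harmonicity forces this to vanish, and invertibility of $I - \pi(A_\mu)$ then forces $v = 0$, so the two representatives coincide. The only nontrivial step in the whole argument is the operator-norm decay of $\pi(A_\mu)^n$; the rest reduces to interchanging Bochner integrals with bounded linear operators. I would invoke \cite[1.1]{drutu-nowak} and \cite[6.1]{marrakchi-dlSalle} for that input rather than reprove it, so the proof reduces to writing down the decomposition and checking harmonicity and uniqueness.
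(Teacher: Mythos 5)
Your proposal is correct and follows essentially the same route as the paper: invoke the spectral-gap/uniform-convexity input (the paper cites \cite[1.1]{drutu-nowak}) to get $\|\pi(A_\mu)\|<1$, sum the Neumann series to define $v_0$, and check $b_0(\mu)=b(\mu)$ so that $b-b_0$ is $\mu$-harmonic. Your uniqueness argument via injectivity of $I-\pi(A_\mu)$ is a small addition the paper leaves implicit, and it is fine.
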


\begin{proof}
    Let $b \in Z^1(G,\pi)$. Since $E$ is uniformly convex and $\pi$ has spectral gap, \cite[1.1]{drutu-nowak} implies that the operator $\pi(A_\mu) = \int_G \pi(g) \, \mathrm{d} \mu (g): E \to E$ satisfies $||\pi(A_\mu)|| < 1$. Hence the series 
    \begin{equation*}
        v_0 =  \sum_{i = 0}^{\infty} \pi(A_\mu)^{i} b(\mu)
    \end{equation*}
   converges to an element in $E$ and satisfies:
    \begin{equation*}
        \pi(A_\mu) v_0 =  \sum_{i = 1}^{\infty} \pi(A_\mu)^{i} b(\mu).
    \end{equation*}
    This means that if we set $ b_0 (g):= v_0 - \pi(g) v_0$, we have that $b_0 \in B^1(G, \pi)$ and that:
    \begin{equation*}
        b_0 (\mu) = v_0 - \pi(A_\mu) v_0 = b(\mu).
    \end{equation*}
    Hence $b_\mathrm{harm} : = b - b_0$ is $\mu$-harmonic. This shows the existence of a $\mu$-harmonic representative in each cohomology class. We will now show uniqueness. Let $b \in B^1 (G, \pi)$ be $\mu$-harmonic. Write $b(g) = v - \pi(g) v$ for some $v \in E$. We have $0 = b(\mu) = v - \pi(A_\mu) v$. Hence $ ||v|| = ||\pi (A_\mu) v || \leq || \pi (A_\mu)|| \, ||v|| < ||v||  $ and so $v = 0$.

\end{proof}

The following theorems show that Property $(T)$ (which can be defined as spectral gap on all unitary representations) implies spectral gap for all continuous isometric actions on $L^p$-spaces. %and on non-commutative $L^p$-spaces.

\begin{thm}\label{(T) implies spectral gap on Lp} \cite[Theorem A]{BFGM}
    Let $G$ be a locally compact group with property $(T)$. Then all continuous isometric representations of $G$ on separable $L^p$-spaces for $1<p<\infty$ have spectral gap.
     %$\bullet$ \cite[1.3]{olivier-kazhdan}  all continuous isometric representations of $G$ on non-commutative $L^p$-spaces for $1<p<\infty$ have spectral gap.
\end{thm}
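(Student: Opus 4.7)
The plan is to follow the strategy of \cite{BFGM} and transfer the problem from $L^p$ to $L^2$ via the Mazur map, so that property $(T)$ can be applied in its classical Hilbertian formulation. Note first that the spectral gap inequality $\sup_{g\in S} \|v-\pi(g)v\| \geq \epsilon\|v\|$ forces the non-existence of nonzero $\pi$-invariant vectors, so I would begin by splitting off the fixed subspace $E^\pi\subset E$ and working on a $G$-invariant complementary subspace, which exists in the $L^p$-setting via a conditional-expectation argument onto the $\sigma$-algebra of $G$-invariant sets.

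For $p=2$, an isometric representation on a separable real Hilbert space is orthogonal, so the statement reduces to the classical characterization of property $(T)$ as spectral gap on unitary representations without invariant vectors. So assume $p\neq 2$. The first real step is then to invoke the Banach--Lamperti theorem classifying linear isometries of $L^p(\Omega,m)$: every such isometry has the form $f\mapsto h\cdot (D_T)^{1/p}\,(f\circ T^{-1})$ for some non-singular bijection $T$ of $(\Omega,m)$, its Radon--Nikodym derivative $D_T$, and a modulus-one multiplier $h$. Applied to each $g\in G$, this extracts from $\pi$ a continuous non-singular action of $G$ on $(\Omega,m)$ together with a $1$-cocycle of multipliers.

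The second step is to build from this data a companion unitary representation $\widetilde\pi\colon G\to \mathcal{U}(L^2(\Omega,m))$, using the same underlying measure-class-preserving action but with the Radon--Nikodym factor rescaled to exponent $1/2$ (and the unimodular multipliers kept). The Mazur map $M_{p,2}\colon L^p\to L^2$, $M_{p,2}(f)=|f|^{p/2}\mathrm{sgn}(f)$, is homogeneous of degree $p/2$ and intertwines $\pi$ with $\widetilde\pi$, so that $\pi$-invariant vectors correspond bijectively to $\widetilde\pi$-invariant vectors. I would then apply property $(T)$ to $\widetilde\pi$ on the complement of its invariant subspace to obtain a spectral gap constant $\widetilde\epsilon>0$ on the compact generating set $S$.

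The main obstacle, and the concluding step, is transferring this spectral gap from $\widetilde\pi$ back to $\pi$. This uses the quantitative fact that $M_{p,2}$ and its inverse $M_{2,p}$ are uniformly continuous on the unit sphere of $L^p$ (resp.\ $L^2$), with explicit Hölder-type moduli $\omega_{p,2}, \omega_{2,p}$ depending only on $p$. By homogeneity we may restrict to unit vectors; chaining the gap inequality for $\widetilde\pi$ through $M_{p,2}$ and $M_{2,p}$ yields an inequality of the form $\sup_{g\in S}\|v-\pi(g)v\|\geq \omega_{2,p}(\widetilde\epsilon\,\omega_{p,2}(\mathrm{const}))\cdot\|v\|$, which is the desired spectral gap on the non-invariant part of the $L^p$-representation, with a constant depending only on $p$ and on the Hilbertian Kazhdan constant of $G$ on $S$.
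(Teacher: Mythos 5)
The paper does not prove this statement: it is quoted directly as Theorem A of \cite{BFGM}, so there is no internal proof to compare against. Your sketch is essentially a reconstruction of the Bader--Furman--Gelander--Monod argument: Banach--Lamperti to extract a non-singular action and a multiplier cocycle, a companion representation on $L^2$ with the Radon--Nikodym exponent changed from $1/p$ to $1/2$, and the Mazur map to transfer (almost-)invariant vectors. That is the right strategy, and your opening observation is also correct and worth making explicit: with the paper's definition of spectral gap the statement is literally false for representations admitting nonzero invariant vectors (e.g.\ the trivial representation on a one-dimensional $L^p$-space), so the intended and provable statement is spectral gap on a complement of $E^\pi$, which is also the form in which the paper actually uses it.

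The one step that cannot simply be ``chained'' is the last one. The Mazur map $M_{p,2}$ is not linear, so it does not carry your chosen complement of $E^\pi$ into the orthogonal complement of the $\widetilde\pi$-invariant vectors, and you are therefore not entitled to apply the Kazhdan constant of $\widetilde\pi$ ``on the complement of its invariant subspace'' directly to $M_{p,2}(v)$. What rescues the argument is that $M_{p,2}$ restricts to a bijection between the two sets of invariant vectors and is uniformly continuous on spheres together with its inverse: a unit vector $v$ in the complement of $E^\pi$ has distance at least $1/(1+C)$ to the set of $\pi$-invariant vectors, where $C$ is the norm of the projection onto $E^\pi$, so the modulus $\omega_{2,p}$ gives a uniform lower bound on the distance from $M_{p,2}(v)$ to the $\widetilde\pi$-invariants; one then decomposes orthogonally in $L^2$ and applies the Kazhdan constant to the non-invariant component (or, as in \cite{BFGM}, argues by contradiction with a sequence of almost-invariant unit vectors). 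Without some version of this step the final displayed inequality of your proposal does not follow. The remaining points you gloss over --- measurability and continuity of $g\mapsto T_g$ coming out of Banach--Lamperti, and compatibility of the conditional expectation with the multipliers $h$ --- are genuine but standard technicalities that are handled carefully in \cite{BFGM}.
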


\subsection{Average growth of harmonic cocycles}

In this section we prove Theorem \ref{Intro: Growth of harmonic cocycles} as a concatenation of the two following propositions.

\begin{prop}\label{mu-harmonic inequality in p-unif convex}
    Let $E$ be a smooth and $(p, c_E)$-uniformly convex Banach space, $\pi: G \to \mathcal{O}(E)$ be a continuous isometric representation of $G$ on $E$ and $b \in Z^1(G, \pi)$. If $b$ is $\mu$-harmonic, then for every $n \geq 1$ we have:
     \begin{equation*}
        ||b||_{L^p(\mu^{*n})} \geq  (c_E(n-1) + 1)^{\frac{1}{p}} ||b||_{L^p(\mu)}.
    \end{equation*}
\end{prop}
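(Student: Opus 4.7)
The plan is to combine the convexity inequality from Proposition~\ref{p-unif convex inequality}, the cocycle relation, and the harmonicity hypothesis to obtain a one-step recursive inequality
\begin{equation*}
    \|b\|_{L^p(\mu^{*n})}^p \geq \|b\|_{L^p(\mu^{*(n-1)})}^p + c_E \|b\|_{L^p(\mu)}^p,
\end{equation*}
from which the claim follows by induction on $n$.

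To set up the recursion, I use the convolution identity $\mu^{*n} = \mu^{*(n-1)} * \mu$, which for any measurable function $f$ gives $\int_G f(x)\, d\mu^{*n}(x) = \int_G \int_G f(gh)\, d\mu^{*(n-1)}(g)\, d\mu(h)$. Applied to $f = \|b(\cdot)\|^p$ together with the cocycle relation $b(gh) = b(g) + \pi(g)b(h)$, I plug $x = b(g)$ and $y = \pi(g)b(h)$ into the $(p,c_E)$-uniform convexity inequality of Proposition~\ref{p-unif convex inequality} to get
\begin{equation*}
    \|b(gh)\|^p \geq \|b(g)\|^p + p\,\langle \pi(g)b(h),\, b(g)^{*_p}\rangle + c_E \|\pi(g)b(h)\|^p,
\end{equation*}
where the last term equals $c_E \|b(h)\|^p$ because $\pi(g)$ is an isometry. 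Integrating both sides against $d\mu^{*(n-1)}(g)\, d\mu(h)$, the first summand contributes $\|b\|_{L^p(\mu^{*(n-1)})}^p$, the third contributes $c_E \|b\|_{L^p(\mu)}^p$ by Fubini, and it remains to show that the cross term vanishes.

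The key step is to verify that the cross term
\begin{equation*}
    p \int_G \int_G \langle \pi(g)b(h),\, b(g)^{*_p}\rangle\, d\mu^{*(n-1)}(g)\, d\mu(h)
\end{equation*}
is zero. I integrate first in $h$ at fixed $g$: since the pairing $\langle\cdot,\cdot\rangle$ is linear in the first variable and $\pi(g)$ is bounded, the Bochner integral commutes through, giving $\langle \pi(g) \int_G b(h)\, d\mu(h),\, b(g)^{*_p}\rangle$, which vanishes by the $\mu$-harmonicity of $b$. This is the only place harmonicity enters, and it is where the proof genuinely needs that hypothesis; without it the unsigned cross term could in principle be large and negative.

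Putting this together, the recursion above holds. The induction base $n=1$ is tautological, and the inductive step adds exactly one copy of $c_E \|b\|_{L^p(\mu)}^p$ per increment of $n$, yielding $\|b\|_{L^p(\mu^{*n})}^p \geq (c_E(n-1)+1)\|b\|_{L^p(\mu)}^p$; taking $p$-th roots gives the stated estimate. The main technical point to be careful about is the legitimacy of exchanging the Bochner integral with the linear pairing and with the bounded operator $\pi(g)$, which is standard but worth stating once, and ensuring that $b$ is $\mu$-integrable (this is guaranteed by the sublinearity of $b$ together with the finite first moment of $\mu$ from Definition~\ref{Defn Cohomologically Adapted}).
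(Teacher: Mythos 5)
Your proof is correct and follows essentially the same route as the paper: both apply the inequality of Proposition~\ref{p-unif convex inequality} to the two terms produced by the cocycle identity, use $\mu$-harmonicity to make the cross term vanish, and iterate the resulting one-step recursion $\|b\|_{L^p(\mu^{*n})}^p \geq \|b\|_{L^p(\mu^{*(n-1)})}^p + c_E\|b\|_{L^p(\mu)}^p$. The only (harmless) difference is that you decompose $\mu^{*n} = \mu^{*(n-1)} * \mu$ with the single $\mu$-step last, so harmonicity applies directly to the variable sitting in the linear slot of the pairing, whereas the paper places the $\mu$-step first and additionally invokes the symmetry of $\mu$ together with the identity $\pi(g)^{-1}b(g) = -b(g^{-1})$.
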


\begin{proof}
    Let $b \in Z^1(G, \pi)$. For any two cohomologically adapted probability measures $\mu, \nu \in \mathrm{Prob}(G)$, the cocycle relation and the symmetry of the measure $\mu$ give that: 
 \begin{align*}
        ||b||_{L^p(\mu * \nu)}^p & = \int_G \int_G || b(gh) ||^p \mathrm{d}\mu(g) \mathrm{d}\nu(h) \\
        & = \int_G \int_G || \pi(g)^{-1} b(g) + b (h) ||^p \mathrm{d}\mu(g) \mathrm{d}\nu(h) \\
        & = \int_G \int_G || b (h) - b(g^{-1}) ||^p \mathrm{d}\mu(g) \mathrm{d}\nu(h) \\
        & = \int_G \int_G || b (h) - b(g) ||^p \mathrm{d}\mu(g) \mathrm{d}\nu(h).
    \end{align*}

Now Proposition \ref{p-unif convex inequality} implies that:
\begin{align*}
        ||b||_{L^p(\mu * \nu)}^p
        & \geq \int_G \int_G || b(h) ||^p + c_E ||b(g)||^p - p \langle b(g), b(h)^{*_p} \rangle \mathrm{d}\mu(g) \mathrm{d}\nu(h) \\
        & = c_E ||b||_{L^p(\mu)}^p + ||b||_{L^p(\nu)}^p - p \,  \langle b (\mu), \int_G b(h)^{*_p} \mathrm{d}\nu(h) \rangle.
    \end{align*}

Hence if $b$ is $\mu$-harmonic, we have \begin{equation*}
    ||b||_{L^p(\mu * \nu)}^p \geq  c_E ||b||_{L^p(\mu)}^p + ||b||_{L^p(\nu)}^p.
\end{equation*} 

Now for $n \geq 1$ we can choose $\nu = \mu^{*(n-1)}$ and after iterating this inequality $(n-1)$ times we obtain: 
\begin{equation*}
    ||b||_{L^p(\mu^{*n})}^p \geq  c_E ||b||_{L^p(\mu)}^p + ||b||_{L^p(\mu^{*(n-1)})}^p \geq (1 + c_E(n-1)) ||b||_{L^p(\mu)}^p.
\end{equation*}

\end{proof}

\begin{prop}\label{mu-harmonic inequality in q-unif smooth}
    Let $E$ be a $(q, d_E)$-uniformly smooth Banach space, $\pi: G \to \mathcal{O}(E)$ be a continuous isometric representation of $G$ on $E$ and $b \in Z^1(G, \pi)$. If $b$ is $\mu$-harmonic, then for every $n \geq 1$ we have:
    \begin{equation*}
        ||b||_{L^q(\mu^{*n})} \leq  (d_E(n-1) + 1)^{\frac{1}{q}} ||b||_{L^q(\mu)}.
    \end{equation*}
\end{prop}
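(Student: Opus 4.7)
The plan is to mirror the proof of Proposition \ref{mu-harmonic inequality in p-unif convex} almost verbatim, simply replacing the $(p, c_E)$-uniform convexity inequality (Proposition \ref{p-unif convex inequality}) with the $(q, d_E)$-uniform smoothness inequality (Proposition \ref{q-unif smooth inequality}), which reverses the direction of the bound. Since both are Banach space inequalities of the same shape, the combinatorial/measure-theoretic manipulations should carry through without change.

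First I would rewrite $||b||_{L^q(\mu*\nu)}^q$ for an arbitrary probability measure $\nu$ with finite first moment: using the cocycle relation $b(gh)=b(g)+\pi(g)b(h)$, the fact that $\pi(g)$ is an isometry, and the symmetry of $\mu$, I obtain
\begin{equation*}
    ||b||_{L^q(\mu*\nu)}^q = \int_G \int_G ||b(h)-b(g)||^q \, \mathrm{d}\mu(g)\, \mathrm{d}\nu(h),
\end{equation*}
exactly as in the uniformly convex case. Then I would apply Proposition \ref{q-unif smooth inequality} with $x=b(h)$ and $y=-b(g)$, which gives pointwise
\begin{equation*}
    ||b(h)-b(g)||^q \leq ||b(h)||^q - q \langle b(g), b(h)^{*_q}\rangle + d_E ||b(g)||^q.
\end{equation*}
Integrating against $\mathrm{d}\mu(g)\, \mathrm{d}\nu(h)$ and pulling the $b(g)$ integral inside the pairing yields
\begin{equation*}
    ||b||_{L^q(\mu*\nu)}^q \leq ||b||_{L^q(\nu)}^q + d_E ||b||_{L^q(\mu)}^q - q \Bigl\langle b(\mu), \int_G b(h)^{*_q}\, \mathrm{d}\nu(h) \Bigr\rangle.
\end{equation*}

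The $\mu$-harmonicity hypothesis kills the last term since $b(\mu)=0$, leaving the recursive inequality
\begin{equation*}
    ||b||_{L^q(\mu*\nu)}^q \leq ||b||_{L^q(\nu)}^q + d_E ||b||_{L^q(\mu)}^q.
\end{equation*}
Finally, setting $\nu = \mu^{*(n-1)}$ and iterating $n-1$ times gives the desired bound $||b||_{L^q(\mu^{*n})}^q \leq (1 + d_E(n-1)) ||b||_{L^q(\mu)}^q$, and extracting the $q$-th root concludes.

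I do not anticipate a genuine obstacle here: the only subtle point worth double-checking is that the Bochner integrability needed to pull $b(\mu)$ out of the pairing $\langle \cdot , \int_G b(h)^{*_q}\, \mathrm{d}\nu(h) \rangle$ is justified by the finite first moment of $\mu$ and $\nu$ together with the sublinearity of $b$ (so that $b(\mu)\in E$) and the fact that $*_q$ is norm-continuous with $||x^{*_q}||_{E^*} = ||x||_E^{q-1}$ (so that the dual integral converges whenever $\nu$ has finite $q$-moment, which is guaranteed by Definition \ref{Defn Cohomologically Adapted}). The rest is a direct dualization of the previous proof.
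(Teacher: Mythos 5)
Your proposal is correct and is exactly what the paper does: its proof of this proposition consists of the single remark that one repeats the argument of Proposition \ref{mu-harmonic inequality in p-unif convex} with Proposition \ref{q-unif smooth inequality} in place of Proposition \ref{p-unif convex inequality}, which is precisely the substitution you carry out (and your integrability remarks, including that a $q$-uniformly smooth space is smooth so the duality mapping exists, are consistent with the paper's setup). No gap.
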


\begin{proof}
    The proof is essentially the same as that of Proposition \ref{mu-harmonic inequality in p-unif convex}, but instead of using the inequality of Proposition \ref{p-unif convex inequality}, we use the inequality from Proposition \ref{q-unif smooth inequality}.
\end{proof}

\subsection{Growth of cocycles for groups with Property $(T)$ but without $FL^p$}

Here we deal with consequences for groups with property $(T)$ but without Property $FL^p$.
Recall that hyperbolic groups do not satisfy $FL^p$ for large $p$ \cite{bourdon-pajot}, hence hyperbolic groups with property $(T)$ are an example of a family of groups we want to deal with here. These include $\mathrm{Sp}(n,1)$ for $n \geq 2$, the group $F_4^{-20}$ \cite[3.3]{bekka-delaHarpe-valette}, their cocompact lattices and many random groups \cite{drutu-mackay}.

We first prove Theorem \ref{Intro: Not FLp and T implies fast growth on all cocycles}.

\begin{proof}[Proof of Theorem \ref{Intro: Not FLp and T implies fast growth on all cocycles}]

    Suppose that $G$ has property $(T)$ but does not have Property $FL^p$ for some $p>2$. Then combining Theorem \ref{(T) implies spectral gap on Lp} and Proposition \ref{spectral gap implies harmonic representative} we obtain that for every isometric representation $\pi$ on any separable $L^p$-space $E$ and for every $b \in Z^1(G, \pi)$ there exists a $\mu$-harmonic cocycle $b_h\in Z^1(G, \pi)$ such that $b_0 : = b - b_h \in B^1(G, \pi)$. Recall that separable $L^p$-spaces for $2<p<\infty$ are $p$-uniformly convex and smooth. Hence using Proposition \ref{mu-harmonic inequality in p-unif convex} we obtain that for $n \geq 1$:
    \begin{align*}
        ||b||_{L^p(\mu^{*n})} & \geq ||b_h||_{L^p(\mu^{*n})} - ||b_0 ||_{L^p(\mu^{*n})} \\
        & \geq  (1 + c_E(n-1))^{\frac{1}{p}} ||b_h||_{L^p(\mu)} - \sup_{g \in G} ||b_0(g)||.
    \end{align*}
    Similarly, separable $L^p$ spaces are $2$-uniformly smooth for $2<p< \infty$, so Proposition \ref{mu-harmonic inequality in q-unif smooth} yields:
    \begin{align*}
        ||b||_{L^2(\mu^{*n})} & \leq ||b_h||_{L^2(\mu^{*n})} + ||b_0 ||_{L^2(\mu^{*n})} \\
        & \leq  (1 + d_E(n-1))^{\frac{1}{2}} ||b_h||_{L^2(\mu)} + \sup_{g \in G} ||b_0(g)||.
    \end{align*}
%The statement for non-commutative $L^p$-spaces is shown in the same way.
\end{proof}

We conclude this section with some remarks dealing with the existence of (proper) slow cocycles on separable $L^p$-spaces. Cornulier, Tessera and Valette showed that for a-$T$-menable groups, there always exist proper 1-cocycles with arbitrarily slow growth \cite[3.10]{cornulier-tessera-valette}. In fact, combining this result with \cite[3.1]{marrakchi-dlSalle} one can extend this statement to $1<p<\infty$. More precisely:

\begin{prop}\label{Slow proper cocycles}
    Let $G$ be a locally compact a-$T$-menable group. For every $1<p<\infty$ and for every proper function  $f : G \to [1, + \infty)$, there exists some isometric representation $\pi: G \to \mathcal{O}(E)$ on some separable $L^p$-space $E$ and a proper 1-cocycle $b$ of $\pi$ such that $||b(g)||_E \leq f(g)$ for all $g \in G$.
\end{prop}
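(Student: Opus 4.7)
The plan is to obtain the statement as a direct combination of two existing results, proceeding in two steps.

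First, I would invoke the Cornulier--Tessera--Valette theorem \cite[3.10]{cornulier-tessera-valette}. Since $G$ is locally compact (and we may assume second countable) and a-$T$-menable, that result provides, for the given proper function $f\colon G\to [1,+\infty)$, a separable Hilbert space $\Hi$, a continuous unitary representation $\pi_0\colon G\to \U(\Hi)$, and a \emph{proper} $1$-cocycle $b_0\in Z^1(G,\pi_0)$ such that
\begin{equation*}
    \|b_0(g)\|_{\Hi} \leq f(g) \quad \text{for every } g\in G.
\end{equation*}

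Second, I would apply \cite[3.1]{marrakchi-dlSalle} (as already used in the Remark following Theorem~\ref{LCFLp1}) to the unitary representation $\pi_0$ and cocycle $b_0$, for the fixed value of $p\in(1,\infty)$. This produces a separable $L^p$-space $E$, a strongly continuous isometric representation $\pi\colon G\to \O(E)$, and a $1$-cocycle $b\in Z^1(G,\pi)$ satisfying the pointwise norm identity
\begin{equation*}
    \|b(g)\|_E = \|b_0(g)\|_{\Hi} \quad \text{for every } g\in G.
\end{equation*}
This identity transfers both properties of $b_0$ to $b$: the upper bound $\|b(g)\|_E\leq f(g)$ is immediate, and properness of $b_0$ (the fact that $\|b_0(g)\|_{\Hi}\to\infty$ as $g\to\infty$ in $G$) transfers verbatim to properness of $b$.

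The only substantive point to check is that the $L^p$-space produced by \cite[3.1]{marrakchi-dlSalle} can be taken to be separable when the input Hilbert space is separable; this should follow directly from their construction, but if not, one can apply Proposition~\ref{separable Lp space inside E_U} to extract a $G$-invariant separable $L^p$-subspace of $E$ containing the orbit of the base point defining $b$, yielding the desired proper cocycle on a standard separable $L^p$-space. I expect this separability bookkeeping to be the only technical issue; the rest of the argument is a formal concatenation of the two cited results.
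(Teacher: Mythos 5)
Your proof is correct and is exactly the argument the paper intends: the authors state Proposition \ref{Slow proper cocycles} as the combination of \cite[3.10]{cornulier-tessera-valette} with the norm-preserving transfer \cite[3.1]{marrakchi-dlSalle}, which is precisely your two-step concatenation. Your extra remark on separability is a reasonable piece of bookkeeping but does not change the argument.
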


Nevertheless, Theorem \ref{Intro: Not FLp and T implies fast growth on all cocycles} says that all a-$FL^p$-menable groups (that is, groups admitting a proper action by affine isometries on some separable $L^p$-space) for some $p>2$ with property $(T)$ impose restrictions on the proper functions $f$ with the same property. Indeed, Theorem \ref{Intro: Not FLp and T implies fast growth on all cocycles} implies that: given a group $G$ with property $(T)$, for every function of the form $f(g) = |g| ^\a$ with $0< \a < \frac{1}{p}$, all 1-cocycles $b$ on any $L^p$-space satisfying $||b(g)|| \leq f(g)$ for all $g \in G$ are automatically bounded.

\begin{rem}
It is worth noting that for any compactly generated locally compact (a-$T$-menable) group $G$, if some proper function $f : G \to [1, + \infty)$ is already the norm of a 1-cocycle $b : G \to \Hi$ of some unitary representation, that is $f(g) = ||b(g)||_\Hi$ for $g \in G$, then any cocycle $b': G \to \Hi'$ of any unitary representation satisfying $||b'(g)||_{\Hi'} \leq ||b(g)||_\Hi^\a$ for all $g \in G$ and some $0 <\a <1$ is automatically an almost coboundary. Indeed, our assumption together with Proposition \ref{Growth of Convolution norms} give that for every $n \geq 1$:
\begin{equation*}
     ||b ' ||_{L^2(\mu^{*n})}^2 \leq   ||b||_{L^{2 \a}(\mu^{*n})}^{2\a} \leq  ||b||_{L^2(\mu^{*n})}^{2\a} \leq n^{\a}.
\end{equation*}
So $\lim_{n \to \infty} \frac{1}{n} ||b ' ||_{L^2(\mu^{*n})}^2 = 0$ and hence $b'$ is an almost coboundary by \cite[2.2]{erschler-ozawa}.

Similarly, for a locally compact group $G$ with property $(T)$, if one assumes that $f(g) = ||b(g)||$ for some 1-cocycle $b$ on some $L^p$-space for some $2<p<\infty$, then any 1-cocycle $b'$ on any separable $L^p$-space such that $||b'(g)|| \leq f(g)^{2 \a /p}$ for all $g \in G$ and some $0 < \a <1$ is automatically bounded. Indeed if such a cocycle $b'$ was unbounded then Theorem \ref{Intro: Not FLp and T implies fast growth on all cocycles} implies that there exist constants $C, D > 0$ such that for large $n$:
\begin{equation*}
    C n \leq ||b ' ||_{L^p(\mu^{*n})}^p \leq ||b||_{L^{2 \a}(\mu^{*n})}^{2\a} \leq ||b||_{L^2(\mu^{*n})}^{2\a} \leq D n^{\a}.
\end{equation*}
This is absurd unless $\a = 1$.
\end{rem}

\section{Equivariant compression of cocycles and drift} \label{Section: Compression of cocycles and drift}

This section is devoted to the proof of Theorem \ref{Intro: Compression implies drift=0}. 
We first show a general inequality for 1-cocycles of unitary representations via a direct computation. We then revisit \cite[Theorem 2.1]{naor-peres} in the locally compact setting, with small modifications to their original proof.
We recall general properties of the drift of a random walk and prove Theorem \ref{Intro: Compression implies drift=0}. 

\subsection{A general inequality for cocycles}

The inequality obtained in the following proposition is similar to \cite[2.2]{erschler-ozawa}. We will not use this in the proof of Theorem \ref{Intro: Compression implies drift=0} as Proposition \ref{general inequality in q-unif smooth} is more general, but we notice that this is enough to deduce \cite[1.2]{cornulier-tessera-valette}.

\begin{prop}\label{Growth of Convolution norms}
    Let $G$ be a compactly generated locally compact group, with compact generating set $S$ and let $\mu \in \Prob(G)$ be a cohomologically adapted measure. Let $(\pi, \Hi)$ be a unitary representation of $G$. Then for every cocycle $b \in Z^1(G, \pi)$ and $n \in \na$, we have:

    \begin{equation*}
        ||b||_{L^2(\mu^{*n})} \leq \sqrt{n} \, ||b||_{L^2(\mu)}.
    \end{equation*}
    Moreover, this is an equality if and only if $b$ is $\mu$-harmonic.
    
\end{prop}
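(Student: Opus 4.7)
I would start by iterating the cocycle relation to obtain $b(g_1\cdots g_n) = \sum_{i=1}^n \pi(g_1\cdots g_{i-1}) b(g_i)$ for all $g_1,\ldots,g_n \in G$. Setting $Y_i := \pi(g_1\cdots g_{i-1}) b(g_i)$ and using that $\pi$ is unitary so $\|Y_i\| = \|b(g_i)\|$, integration against $\mu^{\otimes n}$ gives
\begin{equation*}
    \|b\|_{L^2(\mu^{*n})}^2 = n\, \|b\|_{L^2(\mu)}^2 + 2 \sum_{1 \leq i < j \leq n} \mathrm{Re} \int_{G^n} \langle Y_i, Y_j \rangle\, \mathrm{d}\mu^{\otimes n}.
\end{equation*}
The whole statement thus reduces to controlling the sign and size of the cross terms.

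\textbf{Reducing cross terms to the averaging operator.} For $i < j$, unitarity cancels the common prefix $\pi(g_1\cdots g_{i-1})$, and the variables outside $\{g_i,\ldots,g_j\}$ integrate out trivially, leaving $\int \langle b(g_i), \pi(g_i g_{i+1}\cdots g_{j-1}) b(g_j)\rangle\, \mathrm{d}\mu(g_i)\cdots \mathrm{d}\mu(g_j)$. Integrating the intermediate variables $g_{i+1},\ldots,g_{j-1}$ produces the operator $T^{j-i-1}$, where $T := \pi(A_\mu) = \int_G \pi(g)\, \mathrm{d}\mu(g)$. I would then apply the standard cocycle identity $\pi(g^{-1})b(g) = -b(g^{-1})$ (obtained from $b(g\cdot g^{-1}) = 0$) to absorb the remaining $\pi(g_i)$, followed by the change of variable $g_i \mapsto g_i^{-1}$ allowed by the symmetry of $\mu$; the double integral in $(g_i,g_j)$ then collapses to $-\langle b(\mu), T^{j-i-1} b(\mu)\rangle$, which is real because $T$ is self-adjoint. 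Grouping by $k = j - i$ (each such $k$ arising from $n-k$ pairs), this yields
\begin{equation*}
    \|b\|_{L^2(\mu^{*n})}^2 = n\, \|b\|_{L^2(\mu)}^2 - 2\, \langle b(\mu),\, Q_n(T) b(\mu)\rangle, \qquad Q_n(z) := \sum_{k=1}^{n-1}(n-k)\, z^{k-1}.
\end{equation*}

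\textbf{Positivity of $Q_n(T)$ and the equality case.} The main obstacle is to establish the operator inequality $Q_n(T) \geq 0$. The symmetry of $\mu$ makes $T$ self-adjoint, and $\pi$ unitary gives $\|T\| \leq 1$, so $\mathrm{spec}(T) \subseteq [-1,1]$. By spectral calculus it suffices to check $Q_n(\lambda) \geq 0$ on $[-1,1]$, which I would derive from the telescoping identity
\begin{equation*}
    (1-\lambda)\, Q_n(\lambda) = n - \sum_{k=0}^{n-1} \lambda^k.
\end{equation*}
The right-hand side is non-negative on $[-1,1]$ by the triangle inequality and vanishes only at $\lambda = 1$, where a direct computation gives $Q_n(1) = n(n-1)/2 > 0$. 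Hence $Q_n > 0$ on $[-1,1]$ for $n \geq 2$, so $Q_n(T) \geq cI$ for some $c>0$, proving $\|b\|_{L^2(\mu^{*n})} \leq \sqrt n\, \|b\|_{L^2(\mu)}$. The same strict positivity forces $b(\mu) = 0$ in the equality case, i.e.\ $b$ is $\mu$-harmonic; conversely, if $b(\mu) = 0$ every cross term vanishes identically and equality holds.
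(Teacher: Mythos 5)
Your proof is correct, and after the dust settles it arrives at exactly the identity underlying the paper's argument: both reduce the proposition to the positivity of $\langle Q_n(T)\, b(\mu), b(\mu)\rangle$, where $T = \pi(A_\mu)$ is the self-adjoint contraction $\int_G \pi(g)\,\mathrm{d}\mu(g)$ and $Q_n(z) = \sum_{k=1}^{n-1}(n-k)z^{k-1}$. The paper gets there by iterating the two-measure recursion $\|b\|_{L^2(\mu*\nu)}^2 = \|b\|_{L^2(\mu)}^2 + \|b\|_{L^2(\nu)}^2 - 2\langle b(\mu), b(\nu)\rangle$ and then expanding $b(\mu^{*k}) = \sum_{j<k} T^j b(\mu)$, whereas you expand $b(g_1\cdots g_n)$ in one stroke and collect cross terms by the gap $k=j-i$; this is the same bookkeeping organized differently. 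Where you genuinely diverge is the positivity step. The paper proves $\langle Q_n(T)v,v\rangle \geq 0$ by splitting the powers of $T$ into even and odd, using self-adjointness to write $\langle T^{2j}v,v\rangle = \|T^j v\|^2$, and controlling the odd terms by Cauchy--Schwarz and $\|T\|\leq 1$; this yields the lower bound $\sum_{2j+1\leq n-2}\|T^j v\|^2$, and the equality case rests on the $j=0$ term $\|v\|^2$ appearing in that sum (which requires $n\geq 3$, with $n=2$ handled directly). You instead establish the operator inequality $Q_n(T)\geq cI$ with $c>0$ from the telescoping identity $(1-\lambda)Q_n(\lambda) = n - \sum_{k=0}^{n-1}\lambda^k$ together with the spectral calculus on $\mathrm{spec}(T)\subseteq[-1,1]$. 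This buys you uniform strict positivity, so the equality case $b(\mu)=0$ falls out immediately for every $n\geq 2$ with no case distinction; the trade-off is an appeal to functional calculus where the paper stays at the level of elementary inner-product estimates, but for a bounded self-adjoint operator this is entirely standard and your verification of the polynomial identity and of where the right-hand side vanishes is correct.
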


\begin{proof}
Let $\mu$ and $\nu$ be two cohomologically adapted measures. Recall that we denote by $b(\mu): = \int_G b(g) \, \mathrm{d} \mu(g)$ and by $\pi(A_\mu) := \int_G \pi(g) \, \mathrm{d} \mu(g)$.
    A direct computation shows that we have:
    \begin{align*}
        ||b||_{L^2(\mu * \nu)}^2 & = \int_G \int_G || b(gh) ||^2 \mathrm{d}\mu(g) \mathrm{d}\nu(h) \\
        & = \int_G \int_G || \pi(g)^{-1} b(g) + b (h) ||^2 \mathrm{d}\mu(g) \mathrm{d}\nu(h) \\
        & = \int_G \int_G || b(g) ||^2 + ||b(h)||^2 + 2 \langle \pi(g)^{-1} b(g), b(h) \rangle \mathrm{d}\mu(g) \mathrm{d}\nu(h) \\
        & =  ||b||_{L^2(\mu)}^2 + ||b||_{L^2(\nu)}^2 - 2  \left\langle \int_G b(g^{-1})  \mathrm{d}\mu(g), \int_G b(h) \mathrm{d}\nu(h) \right\rangle  \\
        & = ||b||_{L^2(\mu)}^2 + ||b||_{L^2(\nu)}^2 - 2  \left\langle \int_G b(g)  \mathrm{d}\mu(g), \int_G b(h) \mathrm{d}\nu(h) \right\rangle \\
        & = ||b||_{L^2(\mu)}^2 + ||b||_{L^2(\nu)}^2 - 2  \langle b(\mu), b(\nu)  \rangle.
    \end{align*}

If we apply this $n$ times to $\mu^{*n} = \mu^{*(n-1)} * \mu $  we have:
    \begin{align*}
        ||b||_{L^2(\mu^{*n})}^2 & = ||b||_{L^2(\mu^{*(n-1)})}^2 + ||b||_{L^2(\mu)}^2 - 2  \langle b(\mu^{*(n-1)}), b(\mu) \rangle \\
        & = n ||b||_{L^2(\mu)}^2 - 2  \left\langle \sum_{k = 1}^{n-1} b(\mu^{*k}), b(\mu) \right\rangle.
    \end{align*}

The cocycle relation for probability measures gives:
\begin{equation*}
 b(\mu^{*k}) = \sum_{j = 0}^{k-1} \pi(A_\mu)^j b(\mu).
\end{equation*}

Moreover, the operator $\pi(A_\mu)$ has operator norm 1 ($\mu$ is a probability) and is self-adjoint ($\mu$ is symmetric). Hence for $n\geq 3$:
    \begin{align*}
        \left\langle \sum_{k = 1}^{n-1} b(\mu^{*k}), b(\mu) \right\rangle 
        & = \left\langle \sum_{1 \leq k \leq n-1} \sum_{0 \leq j < k}\pi(A_\mu)^j b(\mu), b(\mu) \right\rangle \\
        & = \left\langle  \sum_{j = 0}^{n-2} (n-1-j) \pi(A_\mu)^j b(\mu), b(\mu) \right\rangle \\
        & =  \sum_{2j \leq n-2} (n-1-2j) || \pi(A_\mu)^j b(\mu) ||^2 \\
        & + \sum_{2j+1 \leq n-2} (n-2-2j) \langle \pi(A_\mu)^j b(\mu), \pi(A_\mu)^{j+1} b(\mu) \rangle \\
        &  \geq \sum_{2j \leq n-2} (n-1-2j) || \pi(A_\mu)^j b(\mu) ||^2 \\
        & - \sum_{2j+1 \leq n-2} (n-2-2j) || \pi(A_\mu)^j b(\mu) ||^2  \\
        & \geq \sum_{2j+1 \leq n-2} || \pi(A_\mu)^j b(\mu) ||^2 \geq 0.
    \end{align*}
    
Plugging this into our previous equality shows that the inequality
 \begin{equation*}
     ||b||_{L^2(\mu^{*n})}^2 \leq n \, ||b||_{L^2(\mu)}^2
 \end{equation*}
 holds, and that there is equality if and only if $\langle \sum_{k = 1}^{n-1} b(\mu^{*k}), b(\mu) \rangle = 0$, and this happens if and only if $b(\mu) = 0$.
\end{proof}

Naor and Peres showed a similar bound for 1-cocycles of finitely generated groups with values in isometric representations on $q$-uniformly smooth Banach spaces, generalizing both bounds obtained here in Propositions \ref{mu-harmonic inequality in q-unif smooth} and \ref{Growth of Convolution norms}. Their result relies on a theorem by Pisier \cite[4.1 and 4.2]{naor-peres-schramm-sheffield}. The approach by Naor and Peres also works in the locally compact setting. Here we give their statement in this generality, but we chose to present their proof using Proposition \ref{q-unif smooth inequality} instead of Pisier's theorem.

\begin{prop}\label{general inequality in q-unif smooth} \cite[Theorem 2.1]{naor-peres} Let $G$ be a compactly generated locally compact group, and let $\mu \in \mathrm{Prob}(G)$ be a cohomologically adapted measure.
    Let $E$ be a $(q, d_E)$-uniformly smooth Banach space, $\pi: G \to \mathcal{O}(E)$ be a continuous isometric representation of $G$ on $E$ and $b \in Z^1(G, \pi)$. For every $n \geq 1$ we have:
    \begin{equation*}
        ||b||_{L^q(\mu^{*n})} \leq  (2(d_E(n-1) + 1)^{\frac{1}{q}} + 1)  ||b||_{L^q(\mu)}.
    \end{equation*}
\end{prop}

\begin{proof}
    Let $g_1, \ldots, g_n \in G$ and define:
    \begin{align*}
        M_n(g_1, \ldots, g_n) &= b(g_1)-b(\mu)+\ldots+\pi(g_1\ldots g_{n-1})(b(g_n)-b(\mu)) \\ 
        &= \sum_{j=1}^n \pi (g_1 \ldots g_{j-1}) (b(g_j) - b(\mu)), \\ 
        N_n(g_1, \ldots, g_n) & = b(g_n^{-1})-b(\mu)+\ldots+\pi(g_n^{-1} \ldots g_{2}^{-1})(b(g_1^{-1})-b(\mu)) \\
        & = \sum_{j=1}^n \pi (g_n^{-1} \ldots g_{j+1}^{-1}) (b(g_j^{-1}) - b(\mu)).
    \end{align*}
    Using the cocycle relation, one can show that 
    \begin{equation*}
        2 b(g_1 \ldots g_n) = M_n(g_1, \ldots, g_n) - \pi (g_1 \ldots g_n) N_n(g_1, \ldots, g_n) + b(\mu) - \pi(g_1 \ldots g_n) b(\mu).
    \end{equation*}
    By integrating this equality we obtain the following upper bound:
    \begin{equation*}
         ||2 b||_{L^q(\mu^{*n})} \leq ||M_n||_{L^q (\mu^{\otimes n})} +  ||N_n||_{L^q (\mu^{\otimes n})} + 2 ||b(\mu)||.
    \end{equation*}
    Since the measure $\mu$ is symmetric, we have $||N_n||_{L^q (\mu^{\otimes n})} = ||M_n||_{L^q (\mu^{\otimes n})}$ and hence the previous inequality simplifies to: 
    \begin{equation*}
        || b||_{L^q(\mu^{*n})} \leq ||M_n||_{L^q (\mu^{\otimes n})} + ||b(\mu)||.
    \end{equation*}
    It remains to estimate $||M_n||_{L^q (\mu^{\otimes n})}$. For this we first notice that $M_n(g_1, \ldots, g_n) = M_{n-1}(g_1, \ldots, g_{n-1}) + \pi (g_1 \ldots g_{n-1}) (b(g_n) - b(\mu))$ and use Proposition \ref{q-unif smooth inequality} 
    \begin{align*}
        ||M_n(g_1, \ldots, g_n)||^q &\leq    ||M_{n-1}(g_1, \ldots, g_{n-1})||^q + d_E ||b(g_n) - b(\mu)||^q \\ 
        & +  q \langle  \pi (g_1 \ldots g_{n-1})( b(g_n) - b(\mu)) , M_{n-1}(g_1, \ldots, g_{n-1})^{*_q} \rangle. 
    \end{align*}
    Integrating with respect to $g_n$ cancels the last term, hence we obtain:
    \begin{equation*}
        ||M_n||_{L^q (\mu^{\otimes n})}^q \leq    ||M_{n-1}||_{L^q (\mu^{\otimes (n-1)})}^q + d_E \int_G ||b(g_n) - b(\mu)||^q \mathrm{d} \mu(g_n).
    \end{equation*}
    After iterating this inequality, we obtain $||M_n||_{L^q (\mu^{\otimes n})}^q \leq (d_E (n-1) +1 ) ||M_1||_{L^q (\mu)}^q$. We conclude that
    \begin{equation*}
        || b||_{L^q(\mu^{*n})} \leq (d_E (n-1) +1 )^\frac{1}{q}||M_1||_{L^q (\mu)} + || b||_{L^q(\mu)} \leq  (2(d_E (n-1) +1 )^\frac{1}{q} + 1)|| b||_{L^q(\mu)}.
    \end{equation*}
\end{proof}

\subsection{Drift and equivariant compression}

For a compactly generated locally compact group $G$ and Haar measure $m_G$, let $\mu \in \Prob(G)$ be a cohomologically adapted probability measure on $G$ with compact support $S$. Denote by $| \cdot |_S$ the word length in $S$. Denote by $X_n$ the random walk on $G$ with transition probability $\mu$: 
 \begin{equation*}
     \omega = (\omega_1, \omega_2, \ldots, \omega_n, \ldots) \mapsto X_n(\omega) =  \omega_1 \ldots \omega_n.
 \end{equation*}
The real valued function $|X_n|_S$ is sublinear in $n$. Hence by Kingman's subadditive ergodic theorem the limits 
\begin{equation*}
     \lim_n \frac{|X_n|_S}{n} = \lim_n \frac{1}{n} \int_{G} | g |_S \,  \mathrm{d} \mu^{*n} (g) = \inf_n \frac{1}{n} \int_{G} | g |_S \,  \mathrm{d} \mu^{*n} (g)
\end{equation*}
exist and are constant almost everywhere \cite{karlsson-ledrappier-drift}. The \textit{drift} (or \textit{escape rate}) of the random walk is the almost everywhere value of the limit $l(\mu) = \lim_n \frac{|X_n|_S}{n} \geq 0$.

The following is the analogue of \cite[1.1]{austin-naor-peres}.

\begin{prop}\label{Compression and convolution norm}
    Suppose that there exists $\epsilon > 0$ and a function $f: \re_+ \to \re_+$ that is increasing at most at linear speed, such that $f(t) \xrightarrow[]{t \to + \infty} + \infty$ and such that for all $n \in \na$:
    \begin{equation*}
        \mathbb{P}(|X_n|_S \geq f(n)) \geq \epsilon.
    \end{equation*}
    Then for every continuous isometric representation $(\pi, E)$ of $G$ on a Banach space $E$ and every $b \in Z^1(G, \pi)$, for every $1 <p<\infty$ and $n \in \na$ we have:
    \begin{equation*}
         \rho_b(f(n )) \leq \epsilon^{-1/p} ||b||_{L^p(\mu^{*n})}.
        \end{equation*}
\end{prop}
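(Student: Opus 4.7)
The proposition is essentially a Markov-type estimate, so the plan is to bound the integral defining $\|b\|_{L^p(\mu^{*n})}^p$ from below on the event where the random walk has already traveled far. First I would interpret $\rho_b$ as the nondecreasing function $\rho_b(r) = \inf_{|g|_S \geq r} \|b(g)\|_E$ (which agrees with the stated definition at integer points on compactly generated groups since $b$ on a ball of radius $n-1$ is bounded in terms of $\rho_b$ only through monotonicity of the infimum; in any case $\rho_b$ is used in the statement with a real argument, so the intended reading is the monotone envelope). The key observation is then that on the event $\{g : |g|_S \geq f(n)\}$, one has $\|b(g)\|_E \geq \rho_b(f(n))$ pointwise.

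Concretely, I would write
\begin{align*}
\|b\|_{L^p(\mu^{*n})}^p
&= \int_G \|b(g)\|_E^p \, \mathrm{d}\mu^{*n}(g) \\
&\geq \int_{\{g \,:\, |g|_S \geq f(n)\}} \|b(g)\|_E^p \, \mathrm{d}\mu^{*n}(g) \\
&\geq \rho_b(f(n))^p \, \mu^{*n}\bigl(\{g : |g|_S \geq f(n)\}\bigr) \\
&= \rho_b(f(n))^p \, \mathbb{P}\bigl(|X_n|_S \geq f(n)\bigr).
\end{align*}
The hypothesis gives $\mathbb{P}(|X_n|_S \geq f(n)) \geq \epsilon$, so $\|b\|_{L^p(\mu^{*n})}^p \geq \epsilon\, \rho_b(f(n))^p$, and taking $p$-th roots yields the desired inequality $\rho_b(f(n)) \leq \epsilon^{-1/p} \|b\|_{L^p(\mu^{*n})}$.

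Since $b$ is continuous and $\mu^{*n}$ is a Borel probability measure absolutely continuous with respect to $m_G$ (as $\mu$ is), all integrals are well-defined, and no measurability issues arise. The only subtle point is the convention for $\rho_b$ at non-integer arguments; once that is fixed, the proof is a one-line Chebyshev/Markov estimate. No convexity of $E$, isometry of $\pi$ beyond continuity of $b$, symmetry of $\mu$, or harmonicity is used here — all the probabilistic content has been placed into the tail bound on $|X_n|_S$, which in practice will come from a concentration inequality for the random walk in subsequent sections. Thus there is no real obstacle; the role of this proposition is to convert equivariant compression lower bounds into lower bounds on $\|b\|_{L^p(\mu^{*n})}$, setting the stage to combine with Proposition \ref{Growth of Convolution norms} to control the drift $l(\mu)$ and deduce Theorem \ref{Intro: Compression implies drift=0}.
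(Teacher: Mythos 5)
Your proof is correct and is essentially identical to the paper's: both restrict the integral defining $\|b\|_{L^p(\mu^{*n})}^p$ to the event $\{|g|_S \geq f(n)\}$ and apply the tail bound, a one-line Chebyshev estimate. Your remark about reading $\rho_b$ as the monotone envelope $\inf_{|g|_S \geq r}\|b(g)\|$ is a point the paper leaves implicit, but it does not change the argument.
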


\begin{proof}
    We have:
    \begin{align*}
        ||b||_{L^p(\mu^{*n})}^p &= \int_G ||b(g)||^p \mathrm{d} \mu^{*n} (g) \\
        & \geq \int_{ \{|g|_S \geq f(n) \} } ||b(g)||^p \mathrm{d} \mu^{*n} (g) \\
        & \geq \rho_b(f(n))^p \, \mathbb{P}(|X_n|_S \geq f(n)) \\
        & \geq \epsilon \rho_b(f(n))^p.
    \end{align*}
\end{proof}

\begin{prop}\label{Upper bound for compression}
    Suppose that $l(\mu) > 0$. \\
     Let $E$ be a $(q, d_E)$-uniformly smooth Banach space, $\pi: G \to \mathcal{O}(E)$ be a continuous isometric representation of $G$ on $E$. There exists a constant $C = C(\mu, q, d_E) > 0$ such that for every 1-cocycle $b \in Z^1(G, \pi)$ and for every $n \in \na$ we have:
    \begin{equation*}
        \rho_b(n) \leq C ||b||_{L^q(\mu)} n^\frac{1}{q}.
    \end{equation*}
    
\end{prop}

\begin{proof}
Since $\frac{|X_n|_S}{n}$ converges almost everywhere to $l(\mu) > 0$ when $n \to \infty$, this convergence is also in probability. Hence for every $\epsilon >0 $ we have:
\begin{equation*}
    \mathbb{P}\left( \left| \frac{|X_n|_S}{n} - l(\mu) \right| \geq \epsilon\right) \xrightarrow{n \to \infty} 0.
\end{equation*}
In particular, for $\epsilon = l(\mu) /2$ we have that:
\begin{equation*}
    \mathbb{P}\left(  \frac{|X_n|_S}{n} \geq \frac{l(\mu)}{2}\right) \xrightarrow{n \to \infty} 1.
\end{equation*}
Choose $N = N(\mu)$ large enough so that $\mathbb{P}\left( \frac{|X_n|_S}{n} \geq \frac{l(\mu)}{2}\right) \geq \frac{1}{4}$ for $n \geq N$. 

Let $b$ be a 1-cocycle of some continuous isometric representation of $G$ on a $q$-uniformly smooth Banach space $E$. Using Proposition \ref{Compression and convolution norm} and Proposition \ref{general inequality in q-unif smooth}, there exists a constant $D = D(q, d_E)>0$ such that for $n \geq N$ we have:
\begin{equation*}
    \rho_b\left( \frac{l(\mu) n}{ 2} \right) \leq 4^\frac{1}{q} ||b||_{L^q(\mu^{*n})} \leq  4^\frac{1}{q} D ||b||_{L^q(\mu)} n^\frac{1}{q}.
\end{equation*}
Hence for $n \geq N$:
\begin{equation*}
    \rho_b(n )  \leq  \Big(\frac{8}{l(\mu)}\Big)^\frac{1}{q} D ||b||_{L^q(\mu)}n^\frac{1}{q}.
\end{equation*}

\end{proof}

\subsection{Large equivariant compression}

We will now prove Theorem \ref{Intro: Compression implies drift=0}. 
%and \ref{Intro: L^p-compression implies not (T)}.

\begin{proof}[Proof of Theorem \ref{Intro: Compression implies drift=0}]
    Let $G$ be a compactly generated locally compact group and suppose that
    there exist a continuous isometric representation $\pi : G \to \mathcal{O}(E)$ on a $q$-uniformly smooth Banach space $E$ and a 1-cocycle $b \in Z^1(G, \pi)$ such that:
    \begin{equation*}
        \rho_b (n) = \inf_{|g|_S = n} ||b(g)||_{E} \succ n^{1/q}.
    \end{equation*}

     The contrapositive of Proposition \ref{Upper bound for compression} allows one to conclude that $l(\mu) = 0$. If $G$ is a discrete group, it is classical to show that $l(\mu) = 0$ implies that $(G, \mu)$ has the Liouville property. For example, this can be shown as a combination of \cite[1.1]{kaimanovich-vershik} and the computation in \cite[Section 4]{karlsson-ledrappier-drift}.
    
    If $G$ is a compactly generated locally compact group, one can define the asymptotic entropy $h(\mu)$ of the random walk driven by $\mu$ by using densities as in \cite[Section III]{derriennic-entropie}. One can reproduce the argument of \cite[Section 4]{karlsson-ledrappier-drift} in the locally compact case to show that the condition $l(\mu) = 0$ still implies that $h(\mu) = 0$. We obtain that $(G, \mu)$ has the Liouville property using \cite[Théorème p.255]{derriennic-entropie}.
\end{proof}

\section{Questions}\label{Section: Questions}

%\paragraph{On the existence of slow cocycles for a-$FL^p$-menable, non a-$T$-menable, non property $(T)$ groups.}

As in Proposition \ref{Slow proper cocycles}, we say that a locally compact compactly generated group $G$ \textit{has arbitrarily slow proper cocycles in $L^p$} (where $1<p<\infty$) if for every proper function $f: G \to [0, \infty)$, there exist some $L^p$-space $E$, a continuous isometric representation $\pi: G \to \mathcal{O}(E)$ and a proper cocycle $b \in Z^1 (G, \pi)$ such that $||b(g)|| \leq f(g)$.

We know that a-$T$-menable groups have arbitrarily slow proper cocycles in $L^p$ for every $1<p< \infty$ \cite[Proposition 3.1]{marrakchi-dlSalle}. We also know that property $(T)$ groups do not have arbitrarily slow proper cocycles in $L^p$ for every $1<p< \infty$ because of Theorem \ref{Intro: Not FLp and T implies fast growth on all cocycles}. 

\begin{ques}\label{Question: Slow cocycles}
Let $G$ be a locally compact compactly generated group. Suppose that $G$ is a-$FL^p$-menable (i.e. $G$ admits a proper cocycle on an $L^p$-space) for some $2<p<\infty$, not a-$T$-menable and does not have property $(T)$. Does $G$ have arbitrarily slow proper cocycles in $L^p$? 
\end{ques}
%(i.e. for every proper function $f: G \to [0, \infty)$, there exists some isometric representation $\pi: G \to \mathcal{O}(E)$ and $b \in Z^1 (G, \pi)$ proper such that $||b(g)|| \leq f(g)$).
Examples of groups with conditions as in Question \ref{Question: Slow cocycles} include products of a-$T$-menable groups with property $(T)$ groups, such as $\mathrm{SO}(n, 1) \times \mathrm{Sp}(m,1)$ for $n, m \geq 2$.

Some of the techniques in Section \ref{Section: Growth of harmonic cocycles} work not only for $L^p$-spaces, but for the larger class of $p$-uniformly convex Banach spaces. An important ingredient in the proof of Theorem \ref{Intro: Not FLp and T implies fast growth on all cocycles} is that every cohomology class of all isometric representations on $L^p$-spaces contain harmonic cocycles because the group in question has property $(T)$. 

\begin{ques}\label{Question: Harmonic in every cohomology class}
    Let $G$ be a locally compact second countable group with property $(T)$. Do all continuous isometric representations of $G$ on $p$-uniformly convex spaces have a harmonic 1-cocycle in every first cohomology class?
\end{ques}

For $L^p$-spaces, Question \ref{Question: Harmonic in every cohomology class} holds since all isometric representations of property $(T)$ groups have spectral gap \cite[Theorem A]{BFGM}. Another family of spaces for which isometric representations of property $(T)$ groups have spectral gap are some non-commutative $L^p$-spaces \cite{olivier-kazhdan}. 

\begin{rem}
    For Koopman representations on non-commutative $L^p$-spaces coming from state-preserving actions on their respective von Neumann algebras, the first cohomology space is trivial when the group has property $(T)$ \cite[Theorem 5.5]{marrakchi-dlSalle} and hence these representations are not interesting for our purposes.
\end{rem} 

Another related question is to obtain upper bounds for $L^\infty$-growth. It is proven in \cite[Proposition 4]{lafforgue-typeneg} and in \cite[pages 207,208]{faraut_distances_1974} that for $G=\mathrm{SL}(2,\mathbb Q_p)$ and $G = \mathrm{SO}(n,1)$, for any unitary representation $\pi\colon G\to U(\Hi)$ and any $1$-cocycle $b$ in $Z^1(G,\pi)$, there exists $C = C(b) >0$ such that for every $n \geq 1$ we have:
\begin{equation*}
    \sup_{|g|_S\leq n}||b(g)||_\Hi \leq C \sqrt n.
\end{equation*}
It is then natural to ask the following:
%This results however, depend heavily on the existence of Gelfand pairs for said groups. This however is no longer possible in the $L^p$-setting.
\begin{ques}
    Let $2<p<\infty$. What are the upper bounds for the $L^\infty$-growth of cocycles with values in $L^p$-spaces of $\mathrm{SL}(2,\mathbb Q_p)$, $\mathrm{SO}(n,1)$, $\mathrm{SU}(n,1)$, $\mathrm{Sp}(n,1)$? 
\end{ques}
The proof of these bounds for unitary representations relies on the use of the Bochner-Godement theorem (cf. \cite[Théorème 3.1]{faraut_distances_1974}), so it is unclear how to extend this to $L^p$-spaces for $p \neq 2$.

%a Lévy-Khintchine formula, whose analogue in the $L^p$-setting seems much harder to manipulate. 

%The problem is that for many of these spaces cohomology tends to be trivial when the group has property $(T)$ \cite{marrakchi-dlSalle}.
\appendix

\section{Appendix}\label{Section: Appendix}

The goal of this appendix is to prove Proposition \ref{separable Lp space inside E_U}, which we state again now, this time in terms of cocycles.

\begin{prop}\label{Appendix: separable Lp space inside E_U}
    Let $G$ be a locally compact second countable group, $\pi\colon G \to \O(E)$ a continuous isometric representation of $G$ on some $L^p$-space $E$ and $b \in Z^1(G, \pi)$. There exists some closed separable $L^p$-space $F \subseteq E$ that contains $b(G)$ and is $\pi(G)$-invariant.
\end{prop}

The proof of this result was communicated to us by Mikael de la Salle. We thank him for his help and for letting us write down his proof.
We first prove Proposition \ref{Appendix: separable Lp space inside E_U} in a particular case.

\begin{prop}\label{Appendix: separable Lp space inside E_U in particular case}
    Let $\G$ be a countable group, $\pi \colon \G \to \O(E)$ be an isometric representation of $\G$ on some $L^p$-space $E = L^p (\Omega, \mathcal{A}, \mu)$, where $\mu$ is a probability measure, and $b \in Z^1(\G, \pi)$. There exists some closed separable $L^p$-space $F \subseteq E$ that contains $b(\G)$ and is $\pi(\G)$-invariant.
\end{prop}

\begin{proof}
    The Banach-Lamperti theorem \cite[3.1]{lamperti} says that $\pi$ can be expressed as $(\pi(g) f)(x) = h_g(x)f( T_g^{-1}(x))$ for $f \in E$, where $T_g : X \to X$ are measurable isomorphisms preserving the measure class of $\mu$, so that $\G \times X \to X, (g, x) \mapsto T_g(x)$ is a measurable action of $\G$ on $X$ and $h_g : \Omega \to \mathbb{R}$ is $\mathcal{A}$-measurable and satisfies $|h_g(x)|^p = \frac{\mathrm{d} ((T_g)_*\mu)}{\mathrm{d}\mu}(x)$ for $\mu$-almost every $x \in X$.

    The space $\mathrm{span}_\re \{ b(g), g \in \G \}$ is a separable (as $\G$ is countable) $\pi(\G)$-invariant (thanks to the cocycle relation) subspace of $E$. Since $\mu$ is a probability, the constant function $\mathbf{1}$ equal to 1 lives in $E$ and $\pi(g) \mathbf{1} = h_g$ for $g \in \G$. This implies that the space $E':= \mathrm{span}_\re \{ b(g), h_g \, | \,  g \in \G \}$ is still separable and $\pi(\G)$-invariant. 
    
    Let $\mathcal{B}$ the the smallest $\s$-algebra on $\Omega$ making all the functions in $E'$ measurable. In particular, the functions $h_g$ are $\mathcal{B}$-measurable and since for $f \in E'$ we have that $\pi(g)f$ is also $\mathcal{B}$-measurable, then $f \circ T_g$ is also $\mathcal{B}$-measurable. Hence the $\s$-algebra $\mathcal{B}$ is preserved by $T_g$ for $g \in \G$. Therefore the $L^p$-space $F := L^p(\Omega, \mathcal{B}, \mu) \subseteq E$ is invariant by $T_g$ for $g \in \G$ and hence $\pi(\G)$-invariant. The space $F$ is closed as conditional expectation gives a bounded projection $E \to F$. Since $\G$ is countable, the $\s$-algebra $\mathcal{B}$ is countably generated and hence the $L^p$-space $F$ is separable.
\end{proof}

The proof of Proposition \ref{Appendix: separable Lp space inside E_U} now consists in being able to reduce to the hypotheses of Proposition \ref{Appendix: separable Lp space inside E_U in particular case}. Before this, we make one more reduction.

\begin{lem}\label{Appendix: Lemma}
    Let $\G$ be a countable group, $\pi\colon \G \to \O(E)$ be a continuous isometric representation of $\G$ on some $L^p$-space $ E = L^p(\Omega,\mathcal{A},\mu)$, where $1<p<\infty$ (here $(\Omega, \mathcal{A}, \mu)$ can be any measure space). Let $X$ be a countable $\pi(\G)$-invariant subset of $E$ such that $X \neq \{ 0 \}$. Then the subset $\Omega' = \bigcup_{f \in X} \Omega \setminus f^{-1}(\{ 0 \} )$ is $\mathcal{A}$-measurable, the measure $\mu$ restricted to $\Omega'$ is $\s$-finite and $E' = L^p(\Omega', \mathcal{A', \mu})$ is $\pi(\G)$-invariant, where $\mathcal{A'}$ denotes the $\s$-algebra of restrictions of elements in $\mathcal{A}$ to $\Omega'$.
\end{lem}

\begin{proof}[Proof of Lemma]
    $\Omega' $ is measurable as a countable union of measurable subsets. Since $p < \infty$, for every $f \in E$, we have $\mu(\Omega \setminus f^{-1}([-\frac{1}{n}, \frac{1}{n}])) < \infty$. For $f \in E$, we have $\Omega \setminus f^{-1}(\{ 0 \} ) = \bigcup_{n>0}\Omega \setminus f^{-1}([-\frac{1}{n}, \frac{1}{n}])$, so $\Omega \setminus f^{-1}(\{ 0 \} )$ is $\s$-finite and $\Omega'$ is $\s$-finite as a countable union of $\s$-finite sets.

    It remains to show that $E'$ is $\pi(\G)$-invariant. In order to do this, we will first show that the complementary subspace $E'' = L^p(\Omega'', \mathcal{A}'', \mu)$ is $\pi(\G)$-invariant, where $\Omega'' = \Omega \setminus \Omega' = \bigcap_{f \in X} f^{-1}(\{ 0 \} )$ and $\mathcal{A}''$ the $\s$-algebra consisting of restrictions of elements of $\mathcal{A}$ to $\Omega''$. Let $h \in E''$. For any $f \in X$ we have: 
    \begin{equation*}
        ||f + h ||_p^p = ||f||_p^p + ||h||_p^p.
    \end{equation*}
    Since $\G$ acts by isometries and $X$ is $\pi(\G)$-invariant, we have for $g \in \G$ and $f \in X$:
\begin{equation*}
    ||f + \pi(g) h ||_p^p = ||f||_p^p + ||\pi(g) h||_p^p.
\end{equation*}
Hence \cite[2.1]{lamperti} implies that $\pi(g)h$ is supported on $f^{-1}(\{ 0 \} )$ for any $f \in X$ and hence on $\Omega''$. This shows that $\pi(g) h \in E''$, so the space $E''$ is $\pi(\G)$-invariant.

Using that $E''$ is $\pi(\G)$-invariant, we now show that $E'$ is $\pi(\G)$-invariant in a similar way. Let $h \in E'$ and $g \in \G$. We write $\pi(g)h = x + y$ where $x \in E'$ and $y \in E''$. Since $\pi$ is isometric, we have:
\begin{equation*}
    ||x||_p^p = ||\pi(g)h-y||_p^p = ||h - \pi(g^{-1})y||_p^p.
\end{equation*}
But we know that $E''$ is $\pi(\G)$-invariant, hence $\pi(g^{-1})y\in E''$ and since $h \in E'$ we have:
\begin{equation*}
    ||h - \pi(g^{-1})y||_p^p = ||h||^p + ||\pi(g^{-1})y||_p^p = ||h||_p^p + ||y||_p^p.
\end{equation*}
Since $||h||_p^p = ||\pi(g) h||_p^p =||x||_p^p + ||y||_p^p$, we obtain that:
\begin{equation*}
    ||x||_p^p = ||x||_p^p + 2 ||y||_p^p.
\end{equation*}
Hence $y = 0$, which means that $\pi(g)h = x \in E'$. We have shown that $E'$ is $\pi(\G)$-invariant.
\end{proof}

\begin{proof}[Proof of Proposition \ref{Appendix: separable Lp space inside E_U}]
    Let $G$ be a locally compact second countable group. By second countability, there exists a countable dense subset $Z$ of $G$. Let $\G$ be the countable dense subgroup of $G$ generated by $Z$. If $\pi\colon \G \to \O(E)$ is a continuous isometric representation of $G$ on an $L^p$-space $E = L^p(\Omega, \mathcal{A}, \mu)$, the continuity of the action says that closed $\pi(\G)$-invariant subspaces are exactly closed $\pi(G)$-invariant subspaces. By Lemma \ref{Appendix: Lemma}, we can restrict to a subset $\Omega'$ of $\Omega$ so that the measure $\mu$ on $\Omega'$ is $\s$-finite and $L^p(\Omega', \mathcal{A}', \mu)$ is $\pi(\G)$-invariant. Since any $\s$-finite measure $\mu$ is in the measure class of a probability measure $\mu_1$, the spaces $ L^p(\Omega', \mathcal{A}', \mu)$ and  $L^p(\Omega', \mathcal{A}', \mu_1)$ are isometric and the space $L^p(\Omega', \mathcal{A}', \mu_1)$ is $\G$-invariant for the induced $\G$-action given by composing with the isometry. Now Proposition \ref{Appendix: separable Lp space inside E_U in particular case} says that there exists a separable $\G$-invariant subspace $F= L^p(\Omega', \mathcal{B}, \mu_1)$ of $L^p(\Omega', \mathcal{A}', \mu_1)$. The space $F$ inherits a $G$-action from the initial $G$-action on $E$. By density of $\G$, $F$ is $G$-invariant.
\end{proof}

\bibliographystyle{amsalpha}
\bibliography{refs.bib}

\noindent Antonio López Neumann \\
Université Paris Cité, Sorbonne Université, CNRS, IMJ-PRG, F-75013 Paris, France \\
lopezneumann@imj-prg.fr \\

\noindent Juan Paucar Zanabria\\
Sección de Matemáticas, Departamento de Ciencias, PUCP, Lima, Perú \\
jlpaucar@pucp.edu.pe\\
\end{document}